\newcommand{\hl}[1]{#1}
\newcommand{\highlighting}[1]{#1}
\newcommand{\Prob}{\ensuremath{\mathbb{P}}}
\newcommand{\Ex}{\ensuremath{\mathbb{E}}}
\def\maxim{\mathop{\textup{maximize}}}
\def\minim{\mathop{\textup{minimize}}}
\theoremstyle{plain}
\newtheorem{theorem}{Theorem}
\newtheorem{lemma}{Lemma}
\newtheorem{proposition}{Proposition}
\theoremstyle{definition}
\newtheorem{assumption}{Assumption}
\title{Age of information cost minimization with no buffers, random 
arrivals and unreliable channels: A PCL-indexability analysis}
\author{Jos\'e Ni\~no-Mora\\
Departamento de Estad\'{\i}stica, Universidad Carlos III de Madrid\\
28903 Getafe (Madrid), Spain\\
\texttt{jose.nino@uc3m.es}}
\date{}
\begin{document}

\maketitle

\begin{abstract}
Over the last decade, the Age of Information has emerged as a key concept and 
metric for applications where the freshness of sensor-provided data is critical. Limited 
transmission capacity has motivated research on the design of tractable policies for 
scheduling information updates to minimize Age of Information cost based on Markov 
decision  models, in particular on the restless multi-armed bandit problem (RMABP). This 
allows {the} use of Whittle’s popular index policy, which is often nearly optimal, provided 
indexability (index existence) is  proven, which has been recently accomplished in some 
models. We aim to extend the application scope of  Whittle’s index policy in a broader AoI 
scheduling  model. We address a  model with no buffers incorporating random packet 
arrivals, unreliable channels, and nondecreasing AoI costs. We use sufficient indexability conditions based on partial conservation laws 
previously introduced by the author to establish the model's indexability and evaluate its 
Whittle index in closed form  under discounted and average cost 
criteria. We further use the index formulae to draw insights on how 
scheduling priority depends on model parameters.
\end{abstract}

\noindent\textbf{Keywords:} Age of Information; scheduling; Markov decision models; nonlinear costs; 
random packet arrivals; unreliable channels; restless bandits; Whittle index; partial 
conservation laws

\noindent\textbf{MSC (2020):} 90C40,  90B36, 90C39, 90B18, 68M18

\medskip
\noindent\textbf{Note:} Published in \emph{Mathematics} \textbf{11}, 4394 (2023). DOI:\ \url{https://doi.org/10.3390/math11204394}

\section{Introduction}
\label{s:intro}
The last decade has witnessed a fast-growing interest in research on {the} 
 \emph{\hl{Age of Information}} 
 (AoI) concept and metric {as} proposed in
Kaul~et~al.~\cite{kauletal11,kauletal12} and extensions thereof.  {The AoI} 
 quantifies the freshness of information at a receiver in a communication system, as
it measures the time elapsed since the most recently received packet was generated at its source.
See, e.g., Kosta~et~al.~\cite{kostaetal17} and Yates~et~al.~\cite{yatesetal21} for authoritative introductions and surveys.
{AoI metrics} have become increasingly relevant as the demand for real-time applications, where timely information delivery is critical, continues to grow.  {These include} Internet of Things, video streaming, online gaming, remote control, and real-time traffic monitoring. 

\subsection{AoI Minimization via Transmission Scheduling and Restless Bandit Framework}
\label{s:aomrmabpf}
{Much AoI research} has {addressed the design of  transmission scheduling policies} to 
minimize AoI costs in {communication networks} {so that users have access to  as much 
up-to-date information as possible}. 
{A relatively simple yet relevant setting for addressing the optimal scheduling problem is that of} single-hop wireless communication networks, where information \emph{\hl{sources}} generate packets targeted to corresponding \emph{\hl{users}}.
Packet transmissions are mediated by a \emph{\hl{base station}} (BS),  
{i.e., a communications node that connects sources to users through a given set of channels}.
{In such a scenario,} the scheduling problem  can be {formulated} as {a restless multi-armed bandit problem (RMABP)}.  
See, e.g., the work reviewed in Section~\ref{s:rpw}.

The  RMABP is a versatile \emph{Markov decision process} (MDP) (see, e.g., Puterman~\cite{put94} and Bertsekas~\cite{bertsek12}) modeling framework  for optimal dynamic priority allocation, introduced by Whittle in~\cite{whit88b} as an extension of the classic \emph{multi-armed bandit problem} (MABP). 
{It} concerns the optimal dynamic selection  of up to $M$ out of $N$ \emph{projects}, {with $1 \leqslant M < N$. Such projects are} generic entities modeled as binary-action MDPs, {which} can be either active/{selected} or passive/rested at each time.
In the present setting, there are $N$ \emph{projects} representing \emph{users} {and $M$ channels}.
{Selecting a project means allocating a channel for attempting to transmit a packet held at 
the BS to the corresponding user}. 

An intuitively appealing and tractable type of policies for the RMABP is provided by \emph{priority-index policies} (or \emph{index policies} for short).
These {attach} an \emph{index} to each project/user, {i.e., a scalar function of its state, 
which} is used as a dynamic measure of selection priority: the larger a project's current 
index value, the higher the selection priority.

Index policies can be optimal in special models, in particular in the classic MABP (where 
passive projects do not change state) with $M = 1$, as first shown by Gittins and 
Jones~\cite{gijo74}, and in Klimov's model~\cite{kl74}. 
Yet, {they are generally suboptimal for} the RMABP, which is {computationally} intractable 
(P-space hard, see Papadimitriou and Tsitsiklis~\cite{papTsik99}).
Still,
Whittle~\cite{whit88b} {proposed a now widely popular heuristic} index policy for  the 
RMABP, {which he conjectured to be asymptotically optimal} under the average criterion, 
as {$M$ and $N$ grow to infinity in a fixed ratio.}
{Weber and Weiss~\cite{wewe90} showed that such a conjecture does not hold in general 
but identified sufficient conditions under which it does hold.}

{Since} its introduction,  research on the RMABP has generated a vast {amount of} 
literature, which was partially reviewed in~\cite{nmmath23}.
{Much work has addressed} applications of Whittle's index policy in a wide variety of {models}, where numerical studies often report a {near-optimal} performance. 
Yet, researchers aiming to apply such a policy to a particular RMABP model are confronted with 
two major roadblocks, widely regarded as difficult: (1) proving that the model is 
\emph{indexable}, {meaning} that it possesses a well-defined Whittle index, which cannot 
be taken for granted, and (2) devising an efficient index {evaluation} scheme. 

Over the last two decades, the author has developed a 
{framework} based on \emph{partial conservation laws} (PCLs) to {overcome such 
roadblocks} in increasingly general settings. See, e.g.,  
Ni\~{n}o{-}Mora~\cite{nmaap01,nmmp02,nmmor06,nmtop07,nmmor20}.
This approach is different from prevailing approaches and still not widely applied.
{We will apply it here, demonstrating} its simplicity and effectiveness for solving a 
hitherto unsolved problem: {proving} indexability and evaluating the Whittle index for an 
AoI optimization model in which such results had not been previously obtained.

\subsection{Related Prior Work: AoI Cost Minimization via Whittle's Index Policy}
\label{s:rpw}
{A number of recent papers have
applied} the RMABP and Whittle's index policy for AoI minimization in  single-hop wireless communication network models.
A {major stream of  such} work considers no-buffer models where  untransmitted  packets in a time slot are {lost}. 
Kadota~et~al.~\cite{kadotaetal18} {addressed a no-buffer network model having 
unreliable channels with a fixed transmission success probability for each user}. 
Packet generation was deterministic:  {each source generated one packet at the start of each 
frame}. 
The frames consisted of a fixed number of time slots {at which  transmissions could be 
attempted}. 
{If a packet transmission failed, it could be reattempted in the following slot} of the current 
frame, if any.
The goal was to minimize the (long-run) average linearly weighted AoI cost. 
Indexability was proven, the Whittle index was obtained in closed form, and, in a 
simulation study, Whittle's policy was benchmarked and shown to be nearly optimal and 
to outperform alternative index policies.

Hsu~et~al.~\cite{hsuetal20} considered an RMABP model for average linearly weighted 
AoI minimization  {incorporating random packet arrivals transmitted through reliable 
channels}.  
Indexability was established, Whittle's index was derived in closed form, and the resulting 
policy was tested. The \emph{online} setting, where arrival rates {are} inferred, was also 
addressed.

Sun~et~al.~\cite{sunetal17}  and Jhunjhunwala and Moharir~\cite{jhunjMoharir18} 
proposed the adoption of nonlinear AoI cost metrics to  {better model the growth of user 
dissatisfaction with data staleness}.
Tripathi and Modiano~\cite{tripMod19} incorporated {average cost metrics into an 
RMABP model with nondecreasing AoI costs in a no-buffer system with  deterministic 
arrivals and unreliable channels.
They proved indexability, obtained Whittle's index in closed form, and tested Whittle's 
index policy}. 
 
{Note that the aforementioned work does not address models incorporating both random packet arrivals and unreliable channels, which is a gap that the present paper fills.}

A related stream of work {focuses on establishing the} asymptotic optimality of Whittle's index policy in the above models under {appropriate} assumptions. 
Maatouk~et~al.~\cite{maatouketal21} established {asymptotic optimality for the model 
in~\cite{kadotaetal18} under a} numerically verifiable recurrence condition. 
Kriouile~et~al.~\cite{kriouileetal22} {proved such a result} under a less stringent 
condition.  

The no-buffer assumption is replaced in another stream of work by that of one-packet 
buffers, {where} the BS stores the last packet generated by a source.
Sun~et~al.~\cite{sunetal20} considered a {one-packet buffer variant of the model 
in~\cite{hsuetal20},} which was shown to result in performance gains. 
In addition to obtaining and testing Whittle's index policy, a decentralized index policy 
was proposed.
Sun~et~al.~\cite{sunetal19}  {extended the model in~\cite{sunetal20} by incorporating 
both random packet arrivals and unreliable channels}. 
Yet, {\cite{sunetal19} did not 
prove} that the model is indexable,
as it stated that ``\emph{it is really hard to establish indexability}''.
An index was {proposed} that is meant to approximate the 
Whittle index as if the model were indexable. 
 The {resulting} policy was benchmarked in a numerical study, showing that it is 
 near-optimal. 
Tang~et~al.~\cite{tangetal22}  extended the model in ~\cite{sunetal20} to incorporate  
{nondecreasing AoI costs}. They argued that the model is indexable, derived the Whittle 
index, and tested it in a numerical study.

The aforementioned work {has focused on the average  AoI cost minimization criterion}.  
Yet, recent work has {argued the relevance of  
the much less applied discounted cost criterion for such problems}. 
Badia and Munari~\cite{badiaMunari22} motivated {its use based on applications, e.g.,} in 
underwater environments,  where 
the assumption {of unlimited operation} is unreasonable.
{In such settings,  the discounted criterion, which  is well known to model a geometrically 
distributed time horizon, is more appropriate}. {In particular, Ref. 
\cite{badiaMunari22} focused on the performance evaluation of discounted AoI metrics} 
in a model with deterministic arrivals and unreliable channels, assuming  {that 
transmissions were attempted with a fixed probability}. 

Wu~et~al.~\cite{wuetal23} considered a discounted AoI minimization model  with 
deterministic arrivals and reliable channels,  proving a 
 property of optimal transmission policies.
 However, they did not consider the design of tractable suboptimal policies with good 
 performance. 
 
Zhang~et~al.~\cite{zhangetal21} investigated the pricing of AoI updates in a discounted 
model.  

\subsection{Contributions over Prior Work}
\label{s:copw}
This paper presents the following contributions:
\begin{enumerate}
\item {We  prove indexability for a no-buffer AoI optimization model with random packet 
arrivals, unreliable channels, and general nondecreasing costs, giving closed formulae for 
its Whittle index,  under both discounted and average cost criteria.  
Table~\ref{table:iarmabpaoim} clarifies the gaps filled by this paper on indexability under 
the average cost criterion. Note that the ``approx.'' beside Ref.~\cite{sunetal19} indicates 
that a proxy of the Whittle index was considered there, as indexability was not proven. 
As for indexability under the discounted criterion, to the best of the author's knowledge, it 
has not been addressed in an AoI setting. 
We thus demonstrate the effectiveness of the \emph{PCL-indexability} approach (based 
on \emph{partial conservation laws} (PCLs), 
see~\cite{nmaap01,nmmp02,nmmor06,nmtop07,nmmor20}) to prove indexability and 
evaluate the Whittle index by analyzing an AoI model that has not yet yielded to the 
prevailing approach.}

\item {We provide simplified Whittle index formulae for relevant special cases, in 
particular linear, quadratic, and threshold-type AoI costs. The resulting expressions allow 
for a more efficient index evaluation than direct application of the general formulae.}

\item {We analyze the Whittle index formulae to elucidate how a user's transmission priority depends on model parameters} under Whittle's policy, in particular 
{on the probabilities of packet arrival and successful transmission}.  
The index is nonincreasing in the former; so, other things being equal, Whittle's policy 
gives higher transmission priority to users with lower arrival rates.
As for the  latter probability, the index increases with it {for} linear costs, thus prescribing higher priority to users for which transmission attempts are more likely to succeed.
{Yet, for nonlinear costs, the index is generally non-monotonic in that  probability}. 
\end{enumerate}

\begin{table}[H]
\caption{\hl{Indexability} 
 analyses in RMABP AoI optimization models under the average cost criterion.}
\label{table:iarmabpaoim}
\setlength{\tabcolsep}{2.05mm}
\centering 
\begin{tabular}{rccccc}
\toprule
\multicolumn{1}{r}{\textbf{Arrivals $\backslash$\ Channels}} &
 &  \multicolumn{2}{c}{\textbf{Reliable}}
 & \multicolumn{2}{c}{\textbf{Unreliable}} \\
\midrule
\multirow{3}{*}{\textbf{Deterministic} \vspace{-6pt}} & 
\multicolumn{1}{r}{Costs $\backslash$\ Buffers}
 &  \multicolumn{1}{c}{No}
 & \multicolumn{1}{c}{Yes}  &  \multicolumn{1}{c}{No}
 & \multicolumn{1}{c}{Yes} \\
\cmidrule{3-6} 
& \multicolumn{1}{r}{Linear} &~\cite{kadotaetal18} &~\cite{sunetal20}&~\cite{kadotaetal18}  &~\cite{sunetal19} (approx.) \\
 \cmidrule{3-6} 
&\multicolumn{1}{r}{Nonlinear } &~\cite{tripMod19} &~\cite{tangetal22} &~\cite{tripMod19} &~\cite{tangetal22} \\
\midrule
\multirow{3}{*}{\textbf{Random} \vspace{-6pt}}&\multicolumn{1}{r}{costs $\backslash$\ buffers}
 &  \multicolumn{1}{c}{No}
 & \multicolumn{1}{c}{Yes} 
&  \multicolumn{1}{c}{No}
 & \multicolumn{1}{c}{Yes} \\
 \cmidrule{3-6}
 &\multicolumn{1}{r}{Linear}  &~\cite{hsuetal20} &~\cite{sunetal20}  &  this paper & ~\cite{sunetal19}  (approx.) \\
  \cmidrule{3-6}
 &\multicolumn{1}{r}{Nonlinear} & this paper &~\cite{tangetal22} & this paper &~\cite{tangetal22} \\
 
\bottomrule
\end{tabular}
\end{table}

\subsection{Structure of the Paper}
\label{s:softp}
The remainder of the paper is organized as follows.
Section~\ref{s:smpf} describes the system model and formulates it {as an} RMABP.
Section~\ref{s:iwi} reviews the concepts of indexability and Whittle index in the present setting.
Section~\ref{s:dpidwi} reviews the PCL-indexability approach and applies it to prove indexability and evaluate the Whittle index under the discounted criterion.
Section~\ref{s:pcliaac} {presents a PCL-indexability analysis for the average cost criterion}.
Section~\ref{s:dwimp} {elucidates the} dependence of Whittle's index on model 
parameters and evaluates the index for special cost functions. 
Finally, Section~\ref{s:discuss} {concludes the paper}.
The paper includes two appendices, which contain some of the given proofs.

\section{System Model and RMABP Formulation}
\label{s:smpf}
We consider a system model where $N$ information \emph{sources} (e.g., monitoring stations) generate status update packets aimed to $N$ corresponding \emph{users} (e.g., applications or sensors).
Time is slotted and time slots are denoted by $t = 0, 1, 2, \ldots$.
Packet generation is modeled by independent Bernoulli processes: source $n = 1, \ldots, N$ generates a packet at the start of a time slot with probability $0 < \lambda_n \leqslant 1$.

A BS {connects sources to users}. 
Transmission capacity is limited, as there are $M < N$ channels, so the BS can attempt to 
transmit at most $M$ packets per slot among those that have been just generated, if any 
exist.  
The BS has no buffering capacity, so any untransmitted packets in a slot are lost.

Channels are unreliable: if the BS attempts to transmit a freshly generated packet from 
source $n$ to user $n$, the transmission {succeeds} with probability $0 < \mu_n \leqslant 
1$ independently across channels and slots. 
If {the packet is transmitted}, the BS receives an instantaneous acknowledgment so that it 
can keep track of the current AoI for each user.
Packet generation and transmission error processes are mutually independent.

The \emph{state} of user $n$ at the beginning of slot $t$ is {denoted by} $Y_n(t) \triangleq (B_n(t), X_n(t))$, where 
$B_n(t)$ equals $1$ if a packet has just been generated by source $n$ and is $0$ otherwise, and $X_n(t)$ is the AoI for user $n$.
Thus, $Y_n(t)$ lies in the state space $\mathcal{Y} \triangleq \{0, 1\} \times \mathbb{N}$, where $\mathbb{N}$ is the {set of natural numbers with elements $1, 2, 3, \ldots$}, as we assume that $X_n(t) \geqslant 1$, because it takes one time slot for a packet to reach the user.

Figure~\ref{fig:aoisystem1} represents a system with $M = 3$ channels and $N = 5$ users 
in a slot where two packets from sources $1$ and $3$ have been generated. Only two 
channels can be allocated, which is indicated with gray filling.
Figure~\ref{fig:aoisystem2} shows a snapshot of the same system with all channels 
allocated in a slot, as there are at least as many packets available as {channels}. Yet, the 
packet from source $5$ will be lost, as it {cannot} be transmitted in the slot. 

\begin{figure}[H]
    \centering
    \includegraphics{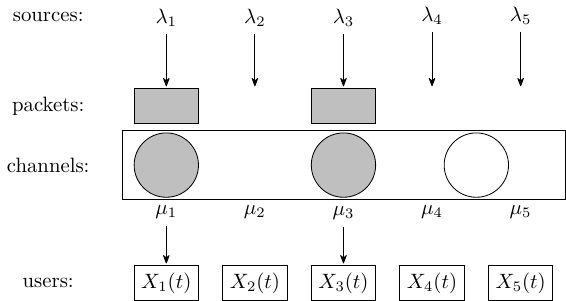}
    \caption{\hl{System} 
 snapshot showing underutilization of channels.}
    \label{fig:aoisystem1}
\end{figure}
\vspace{-12pt}
\begin{figure}[H]
    \centering
    \includegraphics{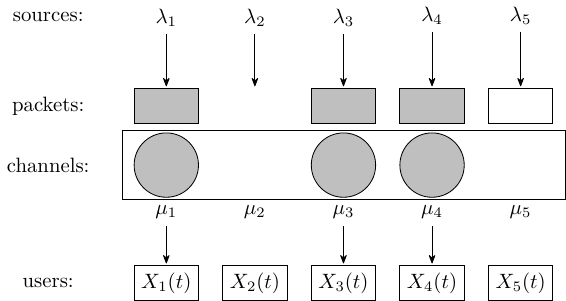}
    \caption{\hl{System} snapshot where a generated packet will be lost.}
    \label{fig:aoisystem2}
\end{figure}

The cost incurred by user $n$ {in} a slot is given by a general nonnegative and nondecreasing function $c_n(i_n)$ of its AoI $i_n$ at the beginning of the slot. 

At the start of each slot, {a controller at the BS} decides which packets to attempt to 
transmit, 
among those just generated, if any, up to a maximum of $M$.

We shall assume that 
{action choice is based only on the current joint  state.
This corresponds to considering} \emph{scheduling policies} $\pi$ from the class $\Pi(M)$ of  \emph{stationary policies} that attempt to transmit up to $M$ packets at each slot.
We denote by $A_n(t)$ the \emph{action} chosen for user $n$ in slot $t$, which can be $1$ 
(attempt transmission of an available packet) or $0$ otherwise (note that this action 
\emph{must} be taken if no packet is available).
As in~\cite{nmmp02}, we shall refer to $\mathcal{Y}^{\{0, 1\}} \triangleq \{1\} 
\times \mathbb{N}$ as the set of \emph{controllable states} for user $n$,
where both actions are available, {and to  $\mathcal{Y}^{\{0\}} \triangleq \{0\} \times  
\mathbb{N}$ as the set of 
\emph{uncontrollable states}, where only 
the passive action is available}. 

The state transition probabilities for user $n$ under action $a_n$, denoted by  $p_{(b_n, i_n), (b_n', i_n')}^{a_n} \triangleq \Prob\{Y_n(t+1) = (b_n', i_n') \, \vert \, Y_n(t) = (b_n, i_n), A_n(t) = a_n\}$ for any time slot $t$, are given by

\begin{equation}
\label{eq:prob0}
p_{(b_n, i_n), (b_n', i_n')}^{0}
 = 
\begin{cases}
1 - \lambda_n & \textup{ if } (b_n', i_n') = (0, i_n+1) \\
\lambda_n & \textup{ if } (b_n', i_n') = (1, i_n+1) \\
0 & \textup{otherwise,}
\end{cases}
\end{equation}
and, for $b_n = 1$,
\begin{equation}
\label{eq:prob1}
p_{(1, i_n), (b_n', i_n')}^{1} = 
\begin{cases}
(1 - \lambda_n) (1 - \mu_n) & \textup{ if } (b_n', i_n') = (0, i_n+1) \\
(1 - \lambda_n) \mu_n & \textup{ if } (b_n', i_n') = (0, 1) \\
\lambda_n (1 - \mu_n) & \textup{ if } (b_n', i_n') = (1, i_n+1) \\
\lambda_n \mu_n & \textup{ if } (b_n', i_n') = (1, 1) \\
0 & \textup{otherwise}.
\end{cases}
\end{equation}

Given a discount factor $0 < \beta < 1$, the expected total discounted cost incurred by policy $\pi \in \Pi(M)$ starting from \hl{the} 
 \emph{joint state} $\mathbf{Y}(0) = (\mathbf{B}(0), \mathbf{X}(0)) = (\mathbf{b}, \mathbf{i}) \in \{0, 1\}^N \times \mathbb{N}^N$, {where we use the vector notation
$\mathbf{B}(t) = (B_n(t))_{n=1}^N$, $\mathbf{X}(t) = (X_n(t))_{n=1}^N$, 
 $\mathbf{b} = (b_n)_{n=1}^N$ and $\mathbf{i} = (i_n)_{n=1}^N$}, is 
\begin{equation}
\label{eq:etdaoic}
{F_{(\mathbf{b}, \mathbf{i})}^\pi \triangleq} \, \Ex_{(\mathbf{b}, \mathbf{i})}^\pi\bigg[\sum_{t=0}^\infty \sum_{n=1}^N c_n(X_n(t)) \beta^t\bigg].
\end{equation}

{In~(\ref{eq:etdaoic}),} $\Ex_{(\mathbf{b}, \mathbf{i})}^\pi$ is the expectation under 
policy $\pi$, conditioned on starting from $(\mathbf{b}, \mathbf{i})$.
The discounted cost problem is to find an \emph{admissible} policy (i.e., in $\Pi(M)$) 
minimizing the cost objective in~(\ref{eq:etdaoic}) for \emph{any} initial joint state 
$(\mathbf{b}, \mathbf{i})$.

We shall also consider the average cost problem, which is to find an admissible scheduling 
policy minimizing the average expected cost per slot, given by
\begin{equation}
\label{eq:avaoic}
{\bar{F}_{(\mathbf{b}, \mathbf{i})}^\pi \triangleq}  \, \limsup_{T \to \infty} \, \frac{1}{T}
\Ex_{(\mathbf{b}, \mathbf{i})}^\pi\bigg[\sum_{t=0}^T \sum_{n=1}^N c_n(X_n(t))\bigg].
\end{equation}

{Note that we restrict attention to stationary policies, even though we have not proven that they are optimal among the wider class of history-dependent policies. 
Conditions ensuring such a result} in MDP models with countable states and unbounded costs are given, e.g., in Sections 6.10 (discounted criterion) and 8.10 (average criterion) of Puterman~\cite{put94}.
Yet, since such issues are only of ancillary interest to this paper, we do not pursue them here, except for certain single-user subproblems (see Proposition~\ref{pro:prop6105a}).

The above problems are {formulated as} RMABPs by identifying projects with users and 
taking the active action on a project with attempting to transmit a packet for a user. %

Given the intractability of computing an optimal policy, we aim to design suboptimal policies that can be implemented with low complexity and perform well.
\emph{Index policies} are appealing in this setting; they are based on defining an 
\emph{index} $\alpha_n\colon \mathcal{Y}^{\{0, 1\}} \to \mathbb{R}$ for each user 
$n$, a scalar function of its controllable states $(1, i_n)$.  
In a slot starting in $(\mathbf{b}, \mathbf{i})$, {where $\mathbf{b}$ and $\mathbf{i}$ are 
as in~(\ref{eq:etdaoic}),} {the policy attempts to transmit  up to $M$ available packets, if 
any exist, 
giving higher priority to users with} larger nonnegative index values and breaking ties 
arbitrarily.

This raises the issue of how to design easily computable index policies that perform well, if possible. 
Regarding ease of calculation, a simple choice is the \emph{greedy index} policy, based on the index 
$\alpha_n(1, i_n) \triangleq c_n(i_n)$.
A more sophisticated idea for constructing index policies was proposed by Whittle~\cite{whit88b}, as discussed below.

\section{Indexability and Whittle Index}
\label{s:iwi}
This section discusses indexability and the Whittle index, adapting the approach in  Whittle~\cite{whit88b} to the nuances of the present model.
{We focus} on the discounted cost problem, 
\begin{equation}
\label{eq:op}
\minim_{\pi \in \Pi(M)} \, F_{(\mathbf{b}, \mathbf{i})}^\pi,
\end{equation}
where {$F_{(\mathbf{b}, \mathbf{i})}^\pi$, defined in~(\ref{eq:etdaoic}),
is the system's \emph{cost metric}, giving the expected total $\beta$-discounted cost 
under policy $\pi$ starting from $(\mathbf{b}, \mathbf{i})$.}
We define $F_{(\mathbf{b}, \mathbf{i})}^*$ to be the optimal cost.  

We start by formulating a {\emph{relaxation} of optimization problem~(\ref{eq:op}) by
replacing the class of admissible 
scheduling policies $\Pi(M)$, which can allocate up to $K$ channels per slot, with a larger 
class of policies. 
In particular, we use the class of policies $\Pi(N)$, which can allocate up 
to $N$ channels per slot, 
and further impose the additional constraint

\begin{equation}
\label{eq:etdrvspc}
G_{(\mathbf{b}, \mathbf{i})}^\pi \leqslant M_\beta,
\end{equation}
where $M_\beta \triangleq M / (1-\beta)$ and 
\[
G_{(\mathbf{b}, \mathbf{i})}^\pi \triangleq \Ex_{(\mathbf{b}, \mathbf{i})}^\pi\Bigg[\sum_{t=0}^\infty \sum_{n=1}^N A_n(t) \beta^t\Bigg].
\]
{We shall refer to $G_{(\mathbf{b}, \mathbf{i})}^\pi$ as the system's \emph{work metric}, which gives} the expected total discounted number of transmission attempts. 

Note that~(\ref{eq:etdrvspc}) is implied by 
the sample-path constraints implicit in $\Pi(M)$, {namely,}
\[
\sum_{n=1}^N A_n(t) \leqslant M, \enspace t = 0, 1, \ldots
\]

The resulting  \emph{relaxed problem} is
\begin{equation}
\label{eq:rp}
\minim_{\pi \in \Pi(N),~(\ref{eq:etdrvspc})} \, F_{(\mathbf{b}, \mathbf{i})}^\pi.
\end{equation}
Note that, by construction, 
the optimal cost $F_{(\mathbf{b}, \mathbf{i})}^{\textup{R}}$ of~(\ref{eq:rp}) is a lower bound on $F_{(\mathbf{b}, \mathbf{i})}^*$. 

We now attach a nonnegative Lagrange multiplier, or, more properly, since we are dealing with an  inequality-constrained optimization problem, a Karush--Kuhn--Tucker (KKT) multiplier,
$\nu \geqslant 0$, to constraint~(\ref{eq:etdrvspc}) in~(\ref{eq:rp}), and formulate the \emph{Lagrangian function}
\begin{equation}
\label{eq:lagrfun}
\mathcal{L}_{(\mathbf{b}, \mathbf{i})}^\pi(\nu) \triangleq F_{(\mathbf{b}, \mathbf{i})}^\pi + \nu \big(G_{(\mathbf{b}, \mathbf{i})}^\pi- M_\beta\big),\end{equation}
and the 
\emph{Lagrangian relaxation}
\begin{equation}
\label{eq:lr}
 \minim_{\pi \in \Pi(N)} \, \mathcal{L}_{(\mathbf{b}, \mathbf{i})}^\pi(\nu).
 \end{equation}
 
Note that multiplier $\nu$ represents a \emph{charge per transmission attempt}.
Again, by construction, the optimal cost $\mathcal{L}_{(\mathbf{b}, \mathbf{i})}^*(\nu)$ of problem~(\ref{eq:lr}) gives in turn a lower bound on  $F_{(\mathbf{b}, \mathbf{i})}^{\textup{R}}$ for any $\nu \geqslant 0$, which follows from the elementary \emph{weak duality} relation 
\begin{equation}
\label{eq:wd}
\mathcal{L}_{(\mathbf{b}, \mathbf{i})}^*(\nu)  \leqslant F_{(\mathbf{b}, \mathbf{i})}^\pi, \enspace \textup{for any} \enspace \pi \in \Pi(N) \enspace \textup{satisfying} \enspace~(\ref{eq:etdrvspc}) \enspace \textup{and} \enspace
\nu \geqslant 0.
\end{equation}

The \emph{Lagrangian dual problem} is to find a multiplier yielding the best lower 
bound: 
\begin{equation}
\label{eq:ldp}
\maxim_{\nu \geqslant 0} \, \mathcal{L}_{(\mathbf{b}, \mathbf{i})}^*(\nu).
\end{equation}

Note that~(\ref{eq:ldp}) is a \emph{concave optimization problem}---since function $\mathcal{L}_{(\mathbf{b}, \mathbf{i})}^*(\nu)$ is concave in $\nu$, being a minimum of linear functions---which  facilitates its computational solution.

We would like to ensure \emph{strong duality}, meaning that 
$\mathcal{L}_{(\mathbf{b}, \mathbf{i})}^*(\nu^*)  = F_{(\mathbf{b}, \mathbf{i})}^{\pi^*}$ 
for some $\pi^* \in \Pi(N)$ satisfying~(\ref{eq:etdrvspc}) and $\nu^* \geqslant 0$, and 
hence, $F_{(\mathbf{b}, \mathbf{i})}^{\textup{R}} = F_{(\mathbf{b}, \mathbf{i})}^{\pi^*}$ 
and
$\mathcal{L}_{(\mathbf{b}, \mathbf{i})}^{**} = F_{(\mathbf{b}, \mathbf{i})}^{\textup{R}}$, where 
 $\mathcal{L}_{(\mathbf{b}, \mathbf{i})}^{**} = \mathcal{L}_{(\mathbf{b}, \mathbf{i})}^*(\nu^*)$ is the optimal  value of problem~(\ref{eq:ldp}).
For such a purpose, we adapt the concept of \emph{complementary slackness} (CS) from inequality-constrained optimization. 
Given a policy $\pi \in \Pi(N)$ and a multiplier $\nu \geqslant 0$, we say that they satisfy CS if
\begin{equation}
\label{eq:cscond}
\nu \big(G_{(\mathbf{b}, \mathbf{i})}^\pi - M_\beta\big) = 0.
\end{equation}

The next result gives a sufficient condition for strong duality, which can be used to compute the bound $F_{(\mathbf{b}, \mathbf{i})}^{\textup{R}}$.
\begin{proposition}
\label{pro:strongd}
Suppose that $\pi^* \in \Pi(N)$ solves {\rm~(\ref{eq:lr})} with $\nu^* \geqslant 0$, and 
that they satisfy CS. Then,
\begin{enumerate}[label=(\alph*)]
\item $\pi^*$ solves the relaxed problem {\rm~(\ref{eq:rp});}
\item $\nu^*$ solves the Lagrangian dual problem {\rm~(\ref{eq:ldp});}
\item Strong duality holds.
\end{enumerate}
\end{proposition}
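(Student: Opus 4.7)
The plan is to unfold the textbook KKT--type sufficient optimality argument in this discounted RMABP setting: once primal feasibility of $\pi^*$ is granted, parts (a)--(c) will collapse into a single short chain of (in)equalities combined with weak duality~(\ref{eq:wd}).

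First I would establish that $\pi^*$ is feasible for the relaxed problem~(\ref{eq:rp}), i.e., $G_{(\mathbf{b}, \mathbf{i})}^{\pi^*} \leqslant M_\beta$. If $\nu^* > 0$, the CS condition~(\ref{eq:cscond}) immediately forces $G_{(\mathbf{b}, \mathbf{i})}^{\pi^*} = M_\beta$, so the active-constraint case is for free. The subtle case is $\nu^* = 0$, in which CS becomes vacuous; here I would either invoke the implicit reading of the proposition that $\pi^*$ is admissible for the relaxed problem (as in standard statements of the KKT sufficient conditions), or observe that when $\nu^* = 0$ the Lagrangian relaxation~(\ref{eq:lr}) is the constraint-free minimization and any solution that happens to be feasible automatically achieves the relaxed optimum.

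Granted feasibility, the heart of the proof is the chain, valid for any $\pi \in \Pi(N)$ satisfying~(\ref{eq:etdrvspc}),
\begin{equation*}
F_{(\mathbf{b}, \mathbf{i})}^{\pi^*} = \mathcal{L}_{(\mathbf{b}, \mathbf{i})}^{\pi^*}(\nu^*) \leqslant \mathcal{L}_{(\mathbf{b}, \mathbf{i})}^{\pi}(\nu^*) \leqslant F_{(\mathbf{b}, \mathbf{i})}^{\pi},
\end{equation*}
where the first equality uses the definition~(\ref{eq:lagrfun}) of $\mathcal{L}$ together with CS~(\ref{eq:cscond}), the middle inequality uses that $\pi^*$ solves~(\ref{eq:lr}) at $\nu^*$, and the last inequality combines $\nu^* \geqslant 0$ with $G_{(\mathbf{b}, \mathbf{i})}^\pi \leqslant M_\beta$. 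This proves (a), since $\pi^*$ is itself admissible for~(\ref{eq:rp}). Reading the first two steps as equalities moreover yields $\mathcal{L}_{(\mathbf{b}, \mathbf{i})}^*(\nu^*) = F_{(\mathbf{b}, \mathbf{i})}^{\pi^*} = F_{(\mathbf{b}, \mathbf{i})}^{\textup{R}}$, which is exactly strong duality, proving (c). For (b), I would then apply weak duality~(\ref{eq:wd}) with the feasible policy $\pi^*$ to obtain $\mathcal{L}_{(\mathbf{b}, \mathbf{i})}^*(\nu) \leqslant F_{(\mathbf{b}, \mathbf{i})}^{\pi^*} = \mathcal{L}_{(\mathbf{b}, \mathbf{i})}^*(\nu^*)$ for every $\nu \geqslant 0$, so that $\nu^*$ attains the maximum in~(\ref{eq:ldp}).

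The only genuine obstacle is the feasibility step discussed above, since the remainder is purely Lagrangian-duality bookkeeping that does not interact with the AoI--specific transition structure~(\ref{eq:prob0})--(\ref{eq:prob1}); indeed, a cleanly stated version of the proposition should probably list primal feasibility of $\pi^*$ as an explicit hypothesis alongside CS, $\nu^* \geqslant 0$, and Lagrangian optimality, and the proof I envision is essentially unchanged whether this feasibility is read in or proved from $\nu^*>0$.
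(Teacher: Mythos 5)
Your argument is essentially the paper's own: the paper's proof writes out the identity
$F_{(\mathbf{b}, \mathbf{i})}^{\pi^*} - \mathcal{L}_{(\mathbf{b}, \mathbf{i})}^*(\nu^*) = F_{(\mathbf{b}, \mathbf{i})}^{\pi^*} + \nu^*\big(G_{(\mathbf{b}, \mathbf{i})}^{\pi^*} - M_\beta\big) - \mathcal{L}_{(\mathbf{b}, \mathbf{i})}^*(\nu^*) = \mathcal{L}_{(\mathbf{b}, \mathbf{i})}^{\pi^*}(\nu^*) - \mathcal{L}_{(\mathbf{b}, \mathbf{i})}^*(\nu^*) = 0$,
which is exactly your chain $F^{\pi^*} = \mathcal{L}^{\pi^*}(\nu^*) = \mathcal{L}^*(\nu^*)$ before appealing to weak duality, so the bookkeeping coincides.

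Your flag on primal feasibility is a genuine and worthwhile observation. As stated, the proposition's hypotheses (Lagrangian optimality of $\pi^*$, $\nu^* \geqslant 0$, and CS) do not by themselves imply $G_{(\mathbf{b}, \mathbf{i})}^{\pi^*} \leqslant M_\beta$ when $\nu^* = 0$: CS is then vacuous, and the identity chain only shows that $F^{\pi^*}$ matches the dual value and lower-bounds $F^\pi$ for every \emph{feasible} $\pi$, not that $\pi^*$ is itself feasible for~(\ref{eq:rp}). The paper's one-line proof passes over this point, so it shares the gap; the cleanest repair is precisely the one you suggest, namely adding primal feasibility of $\pi^*$ as an explicit hypothesis (or, equivalently, restricting to the non-degenerate case $\nu^* > 0$, where CS already delivers $G^{\pi^*} = M_\beta$). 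With that hypothesis read in, your proof is complete and correct.
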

\begin{proof}
The results follow straightforwardly from the above discussion and the  identities
\begin{align*}
F_{(\mathbf{b}, \mathbf{i})}^{\pi^*} - \mathcal{L}_{(\mathbf{b}, \mathbf{i})}^*(\nu^*)  & = F_{(\mathbf{b}, \mathbf{i})}^{\pi^*}  + \nu^* \big(G_{(\mathbf{b}, \mathbf{i})}^{\pi^*} - M_\beta\big) - \mathcal{L}_{(\mathbf{b}, \mathbf{i})}^*(\nu^*) = \mathcal{L}_{(\mathbf{b}, \mathbf{i})}^{\pi^*}(\nu^*) - \mathcal{L}_{(\mathbf{b}, \mathbf{i})}^*(\nu^*) = 0.
\end{align*}
\end{proof}

The solution of the Lagrangian relaxation~(\ref{eq:lr}) is further facilitated by the fact that 
it naturally decouples into the \emph{single-user subproblems}
\begin{equation}
\label{eq:sulr}
\minim_{\pi_n \in \Pi_n} \, F_{n, (b_n, i_n)}^{\pi_n} + \nu G_{n, (b_n, i_n)}^{\pi_n}, 
\end{equation}
for $n = 1, \ldots, N$, where $\Pi_n$ {denotes the stationary transmission policies for the subsystem corresponding to user $n$ \emph{in isolation} with a single channel. Note that}
\begin{equation}
\label{eq:costmn}
F_{n, (b_n, i_n)}^{\pi_n} \triangleq \Ex_{(b_n, i_n)}^{\pi_n}\Bigg[\sum_{t=0}^\infty c_n(X_n(t))  \beta^t\Bigg]
\end{equation}
and
\begin{equation}
\label{eq:workmn}
G_{n, (b_n, i_n)}^{\pi_n} \triangleq \Ex_{(b_n, i_n)}^{\pi_n}\Bigg[\sum_{t=0}^\infty A_n(t)  \beta^t\Bigg]
\end{equation} 
are the user's \emph{cost metric} and \emph{work metric}, respectively, as defined in Ni\~no-Mora~\cite{nmmp02}.
This follows from considering decoupled policies $\pi = (\pi_n)_{n=1}^N$, where $\pi_n$ is used on user $n$, so 
\[
F_{(\mathbf{b}, \mathbf{i})}^{\pi} = \sum_{n=1}^N F_{n, (b_n, i_n)}^{\pi_n} \quad \textup{and} \quad
G_{(\mathbf{b}, \mathbf{i})}^{\pi} = \sum_{n=1}^N G_{n, (b_n, i_n)}^{\pi_n},
\]
hence the Lagrangian function in~(\ref{eq:lagrfun}) can be decomposed as
\begin{equation}
\label{eq:lfdec}
\mathcal{L}_{(\mathbf{b}, \mathbf{i})}^\pi(\nu) = \sum_{n=1}^N \big(F_{n, (b_n, i_n)}^{\pi_n} + \nu G_{n, (b_n, i_n)}^{\pi_n}\big) - M_{\beta} \nu.
\end{equation}

We shall call the model \emph{indexable} if, for each user $n$ and AoI $j_n \in 
\mathbb{N}$, there exists a critical transmission attempt charge $\nu_{(1, j_n)}^*$, which, 
viewed as an index, characterizes the optimal policies for subproblem~(\ref{eq:sulr}) 
under any $\nu \in \mathbb{R}$, as follows: {in state $(1, j_n)$}, it is optimal---for any 
initial state $(b_n, i_n)$---to attempt to transmit  if and only if $\nu_{(1, j_n)}^* \geqslant 
\nu$, and it is optimal to not attempt to transmit if and only if $\nu_{(1, j_n)}^* \leqslant 
\nu$. We shall refer to $\nu_{(1, j_n)}^*$ as user $n$'s \emph{Whittle index}. 
Note that we define Whittle's indexability property as formulated in, e.g.,~\cite{nmasmta14, nmmor20}.

The \emph{Whittle index policy} {for the $N$-user model} gives higher transmission priority to available packets targeted to users with larger nonnegative indices.

\section{PCL-Indexability Analysis: Discounted Cost Criterion}
\label{s:dpidwi}
This section {reviews the PCL-indexability approach in~\cite{nmmor06,nmtop07} and 
applies it to prove indexability of the single-user subproblems~(\ref{eq:sulr}) and derive 
the discounted Whittle index}.

\subsection{A Verification Theorem for Threshold-Indexability}
\label{s:vtpictp}
Consider a single-user subproblem~(\ref{eq:sulr}), dropping the user label $n$ and writing the AoI cost function as $c_j$.
The discounted cost and work performance metrics  are {given by}
\begin{equation}
\label{eq:costm}
F_{(b, i)}^\pi \triangleq \Ex_{(b, i)}^\pi\Bigg[\sum_{t=0}^\infty c_{X(t)} \beta^t\Bigg],
\end{equation}
and 
\begin{equation}
\label{eq:workm}
G_{(b, i)}^\pi \triangleq \Ex_{(b, i)}^\pi\Bigg[\sum_{t=0}^\infty A(t) \beta^t\Bigg],
\end{equation}
so the \emph{single-user subproblem} of concern is {formulated as follows:}
\begin{equation}
\label{eq:ssulr}
\minim_{\pi \in \Pi} \, F_{(b, i)}^{\pi} + \nu G_{(b, i)}^{\pi}.
\end{equation}

We shall write the packet arrival and the transmission success probabilities as $\lambda$ and $\mu$, respectively, dropping the user label.
We shall further use the parameter $p \triangleq \lambda \mu$, the probability that in a 
time slot both a packet is  generated and a corresponding transmission attempt succeeds, 
and consider the complementary probability $q \triangleq 1 - p$. Note that $p > 0$, but 
$q$ may equal $0$, in the case of deterministic arrivals and a reliable channel ($\lambda = 
\mu = 1$).

We make the following standing assumption on the one-slot cost function $c_i$.

\begin{assumption}
\label{ass:ci}
\textup{ }
\begin{enumerate}[label=(\roman*)]
\item $c_i$ is nonnegative and nondecreasing$;$
\item $\displaystyle \sum_{i=1}^\infty  c_i q^i < \infty.$
\end{enumerate}
\end{assumption}

{Note that} part (ii) is a mild growth condition on $c_i$. 
{We next  show that Assumption~\ref{ass:ci} {implies} the existence of optimal stationary 
deterministic policies for subproblem~(\ref{eq:ssulr}). }

{
\begin{proposition}
 \label{pro:prop6105a}
 Single-user subproblem $(\ref{eq:ssulr})$ is solved optimally by stationary deterministic 
 policies determined by Bellman's equations.
 \end{proposition}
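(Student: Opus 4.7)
The plan is to invoke a standard existence theorem for countable-state infinite-horizon discounted Markov decision problems with possibly unbounded one-stage costs, specifically Theorem~6.10.4 of Puterman~\cite{put94}. That theorem asserts that, under a suitable weight function $w:\mathcal{Y}\to[1,\infty)$ that dominates the costs and contracts on average, the optimal value function is the unique solution of Bellman's optimality equation in the weighted supremum-norm space, and that pointwise minimization of the right-hand side of Bellman's equation yields a stationary deterministic optimal policy.

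The argument proceeds in the following steps. First, I would note that the state space $\mathcal{Y}=\{0,1\}\times\mathbb{N}$ is countable and that the feasible action set is finite at every state ($\{0,1\}$ at controllable states, $\{0\}$ at uncontrollable states), so no compactness or measurability conditions need to be verified. Second, I would construct a weight function $w$ satisfying (a) $c_i+|\nu|\leq K\,w(b,i)$ for some constant $K<\infty$, and (b) $\sum_{y'}p^{a}_{(b,i),y'}\,w(y')\leq \rho\, w(b,i)$ for $a\in\{0,1\}$ with some $\rho<1/\beta$. A natural choice is $w(b,i)=1+\widehat{V}(b,i)$, where $\widehat{V}(b,i)$ is the expected total $\beta$-discounted cost-plus-charge starting from $(b,i)$ under the always-active reference policy. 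Finiteness of $\widehat{V}$ follows from Assumption~\ref{ass:ci}(ii): under always-active, successful transmissions form a Bernoulli process with parameter $p=\lambda\mu$, so AoI grows linearly between resets and snaps back to $1$ upon success, and a direct interchange of summation yields the bound $\widehat{V}(b,i)\leq C\sum_{k\geq 0} c_{i+k}\,(\beta q)^k$ for some constant $C$, which is finite by Assumption~\ref{ass:ci}(ii) since $\beta q\leq q$. Third, I would verify the contraction property (b) from the one-step look-ahead expansion of $\widehat{V}$, and finally apply the cited theorem.

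The main obstacle is the construction and verification of the weight function. Under the passive action, the AoI grows deterministically as $i\mapsto i+1$, so naive choices such as $w(b,i)=1+c_i$ or $w(b,i)=q^{-i}$ would fail the contraction condition for general parameter ranges (in particular when $\beta\geq q$). Taking $w=1+\widehat{V}$ sidesteps this by embedding the geometric regeneration rate $p$ directly into the weight, but it requires establishing the one-step inequality $\sum_{y'}p^{a}_{y,y'}\,\widehat{V}(y')\leq \beta^{-1}\,\widehat{V}(y)$ uniformly in the action and, in particular, handling uncontrollable states (where only the passive action is available) jointly with controllable states. Assumption~\ref{ass:ci}(ii) is precisely what supplies this bound.
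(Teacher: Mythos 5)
Your proposal follows the same overall route as the paper — invoking Puterman's Theorem~6.10.4, checking Assumptions~6.10.1 and~6.10.2 through a weight function and Proposition~6.10.5(a) — but you choose a different weight. The paper sets $w_{(b,i)}=q^{-i}$ and displays the verification of Proposition~6.10.5(a) only for the active action from a controllable state $(1,i)$. Your concern about this weight is well founded: under the passive action the AoI increments deterministically, so
\[
\sum_{y'} p^{0}_{(b,i),y'}\,w_{y'} - w_{(b,i)} \;=\; q^{-(i+1)}-q^{-i} \;=\; q^{-i}\,\tfrac{1-q}{q},
\]
which is unbounded in $i$ for $q<1$; the one-step multiplicative constant is $q^{-1}$, so the $J$-step contraction demanded by Assumption~6.10.2(b) (which must hold uniformly over all Markov deterministic policies, including always-passive) fails whenever $\beta\geq q$. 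Since the active-action transition actually yields a \emph{smaller} $\sum p\,w'$ than the passive transition, the displayed case is the easier one, and your observation correctly identifies that the passive transitions are where the weight-function conditions are binding.

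That said, your replacement needs more work before it closes the argument. First, the condition you write, $\sum_{y'}p^{a}_{(b,i),y'}w(y')\leq\rho\,w(b,i)$ with $\rho<1/\beta$, is a strict multiplicative contraction, which is stronger than Proposition~6.10.5(a) requires (that proposition permits an additive slack). Second, with $w=1+\widehat V$ the Bellman recursion for $\widehat V$ under the always-active reference policy gives, at an uncontrollable state,
\[
\sum_{y'} p^{0}_{(0,i),y'}\,w(y') \;=\; 1+\frac{\widehat V(0,i)-c_i}{\beta},
\]
and bounding this by $\rho\bigl(1+\widehat V(0,i)\bigr)$ with $\rho<1/\beta$ would force $\widehat V(0,i)\leq c_i/(1-\rho\beta)+O(1)$ uniformly in $i$, i.e.\ that $\widehat V$ is comparable to $c$ up to a constant. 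That comparison is not an automatic consequence of Assumption~\ref{ass:ci} and is exactly the kind of one-step inequality you flag as the main obstacle but do not carry out. Moreover your claimed bound $\widehat V(b,i)\leq C\sum_{k\geq0}c_{i+k}(\beta q)^k$ should include an additive constant accounting for the post-reset renewal term, which matters when $c_i$ is small. In short: you have correctly located the subtlety the paper's written verification skips over, but your own weight-function construction and contraction check remain a sketch at precisely the point where the proof is delicate.
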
}
 \begin{proof}
 The result is trivial in the case $q = 0$. In the case $q > 0$,
{we apply Theorem 6.10.4 in Puterman~\cite{put94}, which ensures the stated result 
provided that Assumptions 6.10.1 and 6.10.2 therein hold, which are formulated in 
terms of an appropriate weight function.  
Here, we take  $w_{(b, i)} \triangleq q^{-i}$ as the weight function, which satisfies 
Assumption 6.10.1 in~\cite{put94} because
 $c_i / w_{(b, i)} = c_i q^i$ is bounded under Assumption~\ref{ass:ci}.}
 
{To prove Assumption 6.10.2 in~\cite{put94}, we use the sufficient condition provided by
 Proposition 6.10.5.a in~\cite{put94}. In the present setting, this reduces to }
{\begin{equation}
\label{eq:prop6105a}
(1 - \lambda) (1 - \mu) w_{(0, i+1)} + 
(1 - \lambda) \mu w_{(0, 1)} + 
\lambda (1 - \mu) w_{(1, i+1)} + 
\lambda \mu w_{(1, 1)} \leqslant w_{(1, i)} + L,
\end{equation}}
{\hspace{-2pt}for some $L > 0$.
This follows by verifying that the expression
\begin{align*}
& (1 - \lambda) (1 - \mu) w_{(0, i+1)} + 
(1 - \lambda) \mu w_{(0, 1)} + 
\lambda (1 - \mu) w_{(1, i+1)} + 
\lambda \mu w_{(1, 1)} - w_{(1, i)} \\
& = (1 - \lambda) (1 - \mu) q^{-(i+1)} + 
(1 - \lambda) \mu q^{-1} + 
\lambda (1 - \mu) q^{-(i+1)} + 
\lambda \mu q^{-1} - q^{-i}  \\
& =  (1 - \mu) q^{-(i+1)} + 
 \mu q^{-1}  - q^{-i}
\end{align*}
is decreasing in $i$, and hence its maximum value is reached by $i = 0$. This allows us to take 
$L \triangleq \lambda \mu / (1 - \lambda \mu) = (1-q)/q > 0$ satisfying~(\ref{eq:prop6105a}).}
\end{proof}

We shall consider a class of structured policies that intuition suggests might be optimal for 
problem~(\ref{eq:ssulr}) for any transmission attempt charge $\nu \in \mathbb{R}$, and 
which will later be proven to be, indeed, optimal: \emph{threshold policies}, defined as 
follows.
For a nonnegative integer $k \geqslant 0$, the \emph{$k$-threshold policy} (or \emph{$k$-policy} for short) attempts to transmit a packet in controllable state $(1, j)$ if and only if $j > k$. 
We shall write its performance metrics as $F_{(b, i)}^k$ and $G_{(b, i)}^k$.

We shall further consider \emph{marginal performance metrics}, giving the increments in cost and work metrics under threshold policies resulting from a {change in the initial action}. Thus,  
the \emph{marginal cost metric} for the $k$-policy starting from $(1, i)$  is  
\begin{equation}
\label{eq:mcostm}
f_{(1, i)}^k \triangleq F_{(1, i)}^{\langle 0, k\rangle} - F_{(1, i)}^{\langle 1, k\rangle},
\end{equation}
where $\langle a, z\rangle$ denotes the policy that takes action $a$ in the initial time slot 
$t = 0$ and then follows the $k$-policy {from time $t = 1$ onward}.
Note that $f_{(1, i)}^k$ measures the {decrease in cost that results from modifying the $k$-policy by attempting to transmit {initially} in state  $(1, i)$ compared to not doing so}.
The \emph{marginal work metric} is 
\begin{equation}
\label{eq:mworkm}
g_{(1, i)}^k \triangleq G_{(1, i)}^{\langle 1, k\rangle} - G_{(1, i)}^{\langle 0, k\rangle},
\end{equation}
which measures the corresponding  increase in work expended. 

If $g_{(1, i)}^k > 0$ for every $i \in \mathbb{N}$, we further define the \emph{marginal 
productivity} (MP) \emph{metric}
\begin{equation}
\label{eq:mpm}
m_{(1, i)}^k \triangleq \frac{f_{(1, i)}^k}{g_{(1, i)}^k},
\end{equation}
and the \emph{MP index}
\begin{equation}
\label{eq:mpi}
m_{(1, i)} \triangleq m_{(1, i)}^i.
\end{equation}

Consider now the following \emph{PCL-indexability} (PCLI) conditions: 
\begin{itemize}
\item[]$($PCLI1$)$ Positive marginal work: $g_{(1, i)}^k > 0$ for {every} AoI $i \geqslant 1$ and threshold $k \geqslant 0$.
\item[]$($PCLI2$)$ Monotone nondecreasing MP index: $m_{(1, i)}$ is nondecreasing in $i$.
\end{itemize}

The following \emph{verification theorem} {formulates, in the present setting, a result} 
proven by the author in prior work for restless bandits in increasingly general settings. %
See  
 \cite[Corollary \hl{2}]{nmaap01} (finite-state projects), \cite[Theorem \hl{6.3} ]{nmmp02} (finite-state projects with general work metrics), \cite[Theorem \hl{4.1} ]{nmmor06} (semi-Markov countable-state projects), and  \cite[Theorem \hl{1} and Proposition \hl{1}(c)]{nmmor20} (real-state projects).
The result refers to \emph{threshold-indexability}---indexability consistent with threshold 
policies---meaning that \emph{both} the model is indexable \emph{and} threshold 
policies are optimal for subproblem~(\ref{eq:ssulr}) for any $\nu \in \mathbb{R}$. 

\begin{theorem}
\label{the:verif}
Suppose that condition $($PCLI1$)$ holds. Then, the model is threshold-indexable if 
and only if condition $($PCLI2$)$ holds, with Whittle index given by the MP 
index, i.e., $\nu_{(1, i)}^* = m_{(1, i)}$.
\end{theorem}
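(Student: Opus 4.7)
The plan is to recognize that Theorem~\ref{the:verif} is a specialization of the general PCL-indexability verification theorems established by the author in the cited prior work, and to pass the proof through that machinery. Concretely, I would invoke Theorem 4.1 of~\cite{nmmor06} (or the refinement in~\cite{nmmor20}), whose hypotheses are tailored to semi-Markov countable-state projects. The task therefore reduces to (i) checking that the single-user subproblem~(\ref{eq:ssulr}) fits that framework and (ii) verifying that the specialization yields precisely the index formula $\nu_{(1,i)}^* = m_{(1,i)}$ claimed here.

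For the framework check, I would point out that the project is a binary-action countable-state MDP with controllable states $\mathcal{Y}^{\{0,1\}}$ and uncontrollable states $\mathcal{Y}^{\{0\}}$, and that Proposition~\ref{pro:prop6105a} guarantees existence of optimal stationary deterministic policies for every $\nu \in \mathbb{R}$, thus supplying the regularity underpinning the Lagrangian analysis. Assumption~\ref{ass:ci}(ii) is what makes the metrics $F_{(b,i)}^k$, $G_{(b,i)}^k$, $f_{(1,i)}^k$, and $g_{(1,i)}^k$ finite and justifies term-by-term manipulation of the relevant discounted series; in particular it gives finiteness of the reference ``never transmit'' policy. Condition (PCLI1) then ensures the MP metric~(\ref{eq:mpm}) is well-defined, and the MP index~(\ref{eq:mpi}) is the candidate Whittle index.

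For sufficiency, I would telescope the Lagrangian value along threshold policies: for any two thresholds $k < k'$, write $\mathcal{L}_{(1,i)}^{k'}(\nu) - \mathcal{L}_{(1,i)}^{k}(\nu)$ as a sum of one-step marginal increments, each of the form $g_{(1,j)}^{\kappa}\bigl(\nu - m_{(1,j)}^{\kappa}\bigr)$ for appropriate $j,\kappa$. Under the PCL structure the MP metric coincides with the MP index on the relevant thresholds (this is the PCL identity that is the heart of the method), so (PCLI2) turns each difference into a product of a positive marginal-work factor and a monotone slope, yielding optimality of the threshold $k^*(\nu) = \max\{i \geqslant 0 : m_{(1,i)} < \nu\}$. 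This simultaneously proves that (a) threshold policies solve~(\ref{eq:ssulr}) for every $\nu$, and (b) the critical charge at which the optimal action in state $(1,i)$ flips equals $m_{(1,i)}$, i.e., threshold-indexability holds with $\nu_{(1,i)}^* = m_{(1,i)}$.

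For necessity, if the model is threshold-indexable with Whittle index $\nu_{(1,i)}^*$, then by indifference at $\nu = \nu_{(1,i)}^*$ the marginal Lagrangian $f_{(1,i)}^{k} - \nu_{(1,i)}^* g_{(1,i)}^{k}$ must vanish at suitable $k$, giving $\nu_{(1,i)}^* = m_{(1,i)}$; monotonicity of the Whittle index in $i$ (which is part of the definition of threshold-indexability) then transfers directly to (PCLI2). The main obstacle I expect is the PCL identity $m_{(1,i)}^{k} = m_{(1,i)}$ for the thresholds actually encountered in the telescoping, together with proving it in the countable-state, unbounded-cost regime: one must legitimately exchange limits and sums, which rests squarely on Assumption~\ref{ass:ci}(ii). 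Everything else is essentially bookkeeping within the cited PCL framework.
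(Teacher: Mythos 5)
Your proposal matches the paper's own treatment of this theorem: the paper does not give a self-contained proof of Theorem~\ref{the:verif} either, but states that it ``formulates, in the present setting, a result proven by the author in prior work'' and then points to \cite[Corollary 2]{nmaap01}, \cite[Theorem 6.3]{nmmp02}, \cite[Theorem 4.1]{nmmor06}, and \cite[Theorem 1 and Proposition 1(c)]{nmmor20} -- exactly the reduction-to-cited-machinery step that you propose, with the observation that the ``if and only if'' characterization is the one stated in \cite{nmmor20}. Your additional sketch of the internal telescoping/marginal-analysis argument and the role of Assumption~\ref{ass:ci}(ii) in justifying the limit interchanges is a reasonable gloss on what those cited proofs do, but it is not content the present paper itself supplies or needs; the paper treats the verification theorem as imported.
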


 Note that~\cite{nmaap01,nmmp02,nmmor06} show that  $($PCLI1$)$ and $($PCLI2$)$ are sufficient conditions for indexability of discrete-state projects, with the Whittle index being given by the MP index. 
{The characterization of threshold indexability in Theorem~\ref{the:verif} under $($PCLI1$)$ is 
stated in~\cite{nmmor20} as a consequence of Theorem 1 and Proposition 1(c) therein.}
 
We emphasize that the PCL-indexability approach \emph{does not entail proving  first 
optimality of threshold policies}, as in the currently prevailing approach to indexability:
\emph{both} indexability \emph{and} optimality of threshold policies follow in one fell 
swoop from Theorem~\ref{the:verif}.

{We next set out to  apply} Theorem~\ref{the:verif} to the present model, which entails verifying PCL-indexability conditions $($PCLI1$)$ and $($PCLI2$)$. 
\label{s:avtpaoim}
\subsection{Work Metric Analysis and Condition \highlighting{$($PCLI1$)$}} 
\label{s:wmacpcli1}
We start with condition $($PCLI1$)$, namely {that the marginal work metric under threshold policies be positive}. 
Recall that the $k$-policy, with threshold $k \geqslant 0$, takes the active action (attempt to transmit) in state $(1, i)$ if $i > k$, and the passive action otherwise.

{We next} address evaluation of the work metric $G_{(b, i)}^k$ under the $k$-policy. 
Even though in the system model we assumed that the AoI $i$ is a positive integer, for obtaining the $G_{(b, i)}^k$ we include the fictitious value $i = 0$.

From the dynamics in~(\ref{eq:prob0}) and~(\ref{eq:prob1}) it follows that the work metric $G_{(b, i)}^k$ satisfies the following linear equations: for $i, k \geqslant 0$,
\begin{equation}
\label{eq:G0ik}
G_{(0, i)}^k = \beta (1-\lambda) G_{(0, i+1)}^k + \beta \lambda G_{(1, i+1)}^k{,}
\end{equation}
and
\begin{equation}
\label{eq:G1ik}
G_{(1, i)}^k = 
\begin{cases}
\beta (1-\lambda) G_{(0, i+1)}^k + \beta \lambda G_{(1, i+1)}^k, & \enspace i \leqslant k \\
1 + \beta (1-\lambda) \mu G_{(0, 1)}^k + \beta  \lambda  \mu G_{(1, 1)}^k  & \\
\quad + \beta (1-\lambda) (1-\mu) G_{(0, i+1)}^k + \beta \lambda (1-\mu) G_{(1, i+1)}^k, &  \enspace i > k.
\end{cases}
\end{equation}

In the future, to simplify the notation, we will find it convenient to define 
\begin{equation}
\label{eq:di}
\sigma_i \triangleq 1 - \beta q - \beta^{i+1} p,
\end{equation}
and 
\[
\Gamma_{i}^k \triangleq 
\begin{cases}
\beta^{k-i} \Gamma_{k}^k, & \quad i < k \\
\displaystyle \frac{\beta \lambda}{\sigma_k}, & \quad i \geqslant k,
\end{cases}
\]

The next result gives closed-form expressions for evaluating {the work metric} $G_{(b, i)}^k$.
\begin{lemma}
\label{lma:Geval} We have
\[
G_{(0, i)}^k = 
\Gamma_{i}^k
\quad \textup{and} \quad
G_{(1, i)}^k = 
\begin{cases}
\Gamma_{i}^k, & \enspace i \leqslant k \\
1 + \mu \Gamma_{0}^k + (1-\mu) \Gamma_{i}^k, &  \enspace i > k.
\end{cases}
\]
\end{lemma}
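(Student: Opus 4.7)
The plan is guess-and-verify: substitute the proposed closed-form expressions into the linear system~(\ref{eq:G0ik})--(\ref{eq:G1ik}) and check that all equations are satisfied. Since $G_{(b,i)}^k$ is bounded by $1/(1-\beta)$ (the action $A(t)$ lies in $\{0,1\}$), it is the unique bounded solution of that system, so any bounded candidate satisfying the equations must coincide with it. The ansatz has two qualitatively different shapes depending on whether $i\leqslant k$ or $i>k$, so I would split the argument along this boundary and handle the active regime $i>k$ first.

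For the active regime, I would observe that the ansatz makes both $G_{(0,i)}^k$ and $G_{(1,i)}^k$ independent of $i$. This is plausible because from any state with $i>k$ the $k$-policy acts identically regardless of the specific value of $i$ (the ``reset'' on a successful transmission always goes to AoI $1$, and both $i$ and $i+1$ remain above $k$), so the recursion decouples from $i$. Substituting the two proposed constants into~(\ref{eq:G0ik}) at $i>k$ yields a single scalar equation in $\gamma:=\Gamma_k^k$, which, after using the elementary relation $\Gamma_0^k=\beta^k\gamma$, reduces to $(1-\beta q)\gamma-\beta\lambda=\beta^{k+1}p\,\gamma$. Solving gives exactly $\gamma=\beta\lambda/\sigma_k$ with $\sigma_k$ as in~(\ref{eq:di}). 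An analogous substitution into~(\ref{eq:G1ik}) at $i>k$ then confirms the form $G_{(1,i)}^k=1+\mu\Gamma_0^k+(1-\mu)\gamma$.

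For the passive regime $i\leqslant k$, equations~(\ref{eq:G0ik}) and~(\ref{eq:G1ik}) coincide, forcing $G_{(0,i)}^k=G_{(1,i)}^k=:H_i$. The recursion then collapses to $H_i=\beta H_{i+1}$ for $i<k$, while the case $i=k$ reads $H_k=\beta[(1-\lambda)G_{(0,k+1)}^k+\lambda G_{(1,k+1)}^k]$, whose right-hand side is already known from the active-regime step. A short computation, again using $\sigma_k=1-\beta q-\beta^{k+1}p$ and $\Gamma_0^k=\beta^{k+1}\lambda/\sigma_k$, yields $H_k=\beta\lambda/\sigma_k=\Gamma_k^k$. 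Backward iteration of $H_i=\beta H_{i+1}$ then gives $H_i=\beta^{k-i}\Gamma_k^k=\Gamma_i^k$ for all $i\leqslant k$, matching the lemma's claim.

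The main obstacle is the bookkeeping inside~(\ref{eq:G1ik}) for $i>k$: the successful-transmission terms involve $G_{(0,1)}^k$ and $G_{(1,1)}^k$, which fall in the passive regime when $k\geqslant 1$ but in the active regime when $k=0$. I would handle both cases uniformly by substituting the ansatz; in the degenerate case $k=0$ the identity $\sigma_0=1-\beta$ (which uses $p+q=1$) makes the collapse still work cleanly. Once the equations are verified in both regimes and at the boundary, uniqueness of the bounded solution to~(\ref{eq:G0ik})--(\ref{eq:G1ik}) completes the proof.
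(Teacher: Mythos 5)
Your guess-and-verify strategy is correct and reaches the same conclusion, but it is structured differently from the paper's proof. The paper works constructively: it first rewrites $G_{(1,i)}^k$ in terms of $G_{(0,j)}^k$, derives one-variable recursions for $G_{(0,i)}^k$ (pure $\beta$-geometric for $i<k$, affine with ratio $\beta q$ for $i\geqslant k$), iterates the forward recursion, kills the tail using $G_{(b,i)}^\pi \leqslant 1/(1-\beta)$, solves the resulting $2 \times 2$ linear system for $G_{(0,0)}^k$ and $G_{(0,k)}^k$, and finally notes $G_{(0,k+l)}^k \equiv G_{(0,k)}^k$. You instead substitute the known closed forms into~(\ref{eq:G0ik})--(\ref{eq:G1ik}), reduce everything to a single scalar equation for $\gamma = \Gamma_k^k$ yielding $\gamma = \beta\lambda/\sigma_k$, and then invoke uniqueness. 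This is a legitimate shortcut: the algebra overlaps substantially, but you sidestep the tail limit and the separate constancy argument for $i>k$ because those facts are built into the ansatz.

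One step needs more care. You write ``Since $G_{(b,i)}^k$ is bounded by $1/(1-\beta)$ \ldots\ it is the unique bounded solution of that system,'' but boundedness of a particular solution does not by itself give uniqueness; what is needed is that the system~(\ref{eq:G0ik})--(\ref{eq:G1ik}) admits at most one bounded solution. That holds because the system has the form $G = r + \beta P G$ with $P$ substochastic and $r$ bounded, so the map $G \mapsto r + \beta P G$ is a $\beta$-contraction on bounded functions on $\mathcal{Y}$ in the sup norm; the true work metric and your ansatz are both bounded fixed points and hence coincide. Once this is stated explicitly the proof is complete. The edge cases you flag ($\mu=1$, $q=0$, and $k=0$) are indeed handled uniformly by $\sigma_k = 1-\beta q-\beta^{k+1}p$, as you indicate, since for $k=0$ the identity $\sigma_0 = 1-\beta$ makes the scalar equation collapse to $\gamma(1-\beta)=\beta\lambda$.
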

\begin{proof}
See Appendix~\ref{s:wmeval}.
\end{proof}

The following result evaluates  the marginal work metric $g_{(1, i)}^k$. Note that we use the standard notation $x^+ \triangleq \max(x, 0)$.
\begin{lemma}
\label{lma:mgeval} 
\begin{align*}
g_{(1, i)}^k  & = 
1 - \mu (\beta^{(k-i)^+}  - \beta^{k}) G_{(0, k)}^k = 
\frac{1 - \beta (1 - (1-\beta^{(k-i)^+}) p)}{1 - \beta (1 - (1-\beta^k) p)} = \frac{1 - \beta q - \beta^{(k-i)^++1} p}{1 - \beta q - \beta^{k+1} p} \\
& = \frac{\sigma_{(k-i)^+}}{\sigma_k}, \quad i \geqslant 1.
\end{align*}
\end{lemma}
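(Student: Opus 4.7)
My plan is to reduce $g_{(1,i)}^k$ to a simple one-step lookahead difference and then plug in the closed-form expressions from Lemma~\ref{lma:Geval}. First, from the definitions of $\langle 0,k\rangle$ and $\langle 1,k\rangle$ and the dynamics in~(\ref{eq:prob0})--(\ref{eq:prob1}), I would write
\[
G_{(1,i)}^{\langle 0,k\rangle} = \beta(1-\lambda)G_{(0,i+1)}^k + \beta\lambda G_{(1,i+1)}^k,
\]
\[
G_{(1,i)}^{\langle 1,k\rangle} = 1 + \beta(1-\lambda)\mu G_{(0,1)}^k + \beta\lambda\mu G_{(1,1)}^k + \beta(1-\lambda)(1-\mu)G_{(0,i+1)}^k + \beta\lambda(1-\mu)G_{(1,i+1)}^k.
\]
Subtracting, the $(1-\mu)$ and full-weight passive terms combine so that
\[
g_{(1,i)}^k = 1 + \beta\mu\bigl[(1-\lambda)G_{(0,1)}^k + \lambda G_{(1,1)}^k\bigr] - \beta\mu\bigl[(1-\lambda)G_{(0,i+1)}^k + \lambda G_{(1,i+1)}^k\bigr].
\]

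Next, I would apply~(\ref{eq:G0ik}) in both directions, using it with index $0$ and with index $i$, to recognize each bracketed expression as $G_{(0,\cdot)}^k/\beta$. This collapses the formula to
\[
g_{(1,i)}^k = 1 + \mu\bigl(G_{(0,0)}^k - G_{(0,i)}^k\bigr) = 1 + \mu\bigl(\Gamma_{0}^k - \Gamma_{i}^k\bigr),
\]
by Lemma~\ref{lma:Geval}. Now I would use the piecewise definition of $\Gamma_i^k$: for $i\le k$, $\Gamma_0^k - \Gamma_i^k = (\beta^{k}-\beta^{k-i})\Gamma_k^k$; for $i>k$, $\Gamma_0^k - \Gamma_i^k = (\beta^{k}-1)\Gamma_k^k$. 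Both cases unify into $-(\beta^{(k-i)^+}-\beta^k)\Gamma_k^k$, and since $\Gamma_k^k = G_{(0,k)}^k = \beta\lambda/\sigma_k$, this yields the first claimed identity
\[
g_{(1,i)}^k = 1 - \mu\bigl(\beta^{(k-i)^+} - \beta^k\bigr)G_{(0,k)}^k.
\]

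Finally, I would perform the algebraic simplification. Substituting $G_{(0,k)}^k = \beta\lambda/\sigma_k$ and using $p=\lambda\mu$, the numerator becomes
\[
\sigma_k - p\bigl(\beta^{(k-i)^++1} - \beta^{k+1}\bigr) = 1 - \beta q - \beta^{(k-i)^++1}p = \sigma_{(k-i)^+},
\]
which gives $g_{(1,i)}^k = \sigma_{(k-i)^+}/\sigma_k$. The intermediate form $\bigl(1-\beta(1-(1-\beta^{(k-i)^+})p)\bigr)/\bigl(1-\beta(1-(1-\beta^k)p)\bigr)$ follows because expanding the inner expression gives exactly $1-\beta q - \beta^{j+1}p = \sigma_j$.

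I do not expect any real obstacle here; the only care needed is in handling the unified piecewise form via $(k-i)^+$ and keeping track of the parameter identity $p = \lambda\mu$, $q = 1-p$ when reshaping $\sigma_k$. The conceptual content is entirely in the one-step reduction that exploits~(\ref{eq:G0ik}) to kill the $i$-dependent passive terms; the rest is bookkeeping.
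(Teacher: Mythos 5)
Your proposal is correct and follows essentially the same route as the paper's proof: both reduce $g_{(1,i)}^k$ to $1 + \mu\bigl(\Gamma_0^k - \Gamma_i^k\bigr)$ and then finish by simplifying the piecewise form of $\Gamma_i^k$ with $\Gamma_k^k = \beta\lambda/\sigma_k$. The paper invokes the already-established identities $G_{(1,i)}^{\langle 0,k\rangle} = G_{(0,i)}^k$ and $G_{(1,i)}^{\langle 1,k\rangle} = 1 + \mu G_{(0,0)}^k + (1-\mu)G_{(0,i)}^k$ from the appendix derivation behind Lemma~\ref{lma:Geval}, whereas you re-derive that reduction directly from the one-step dynamics and~(\ref{eq:G0ik}); this is a presentational difference, not a different argument.
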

\begin{proof}
We can use the above results to represent $g_{(1, i)}^k$ as
\begin{align*}
g_{(1, i)}^k & = G_{(1, i)}^{\langle 1, k\rangle} - G_{(1, i)}^{\langle 0, k\rangle}  = 1 + \mu G_{(0, 0)}^k + (1-\mu) G_{(0, i)}^k - G_{(0, i)}^k = 
1 - \mu \big(\Gamma_{i}^k - \Gamma_{0}^k\big) \\
& =
\begin{cases}
 1 - \mu (\beta^{k-i}  - \beta^{k}) \Gamma_{k}^k, & \quad i \leqslant k \\
 1 - \mu (1 - \beta^{k}) \Gamma_{k}^k, & \quad i > k
\end{cases} 
= 
\begin{cases}
\displaystyle \frac{1 - \beta q - \beta^{k-i+1} p}{1 - \beta q - \beta^{k+1} p} = \frac{\sigma_{k-i}}{\sigma_k}, & \quad i \leqslant k \\ \\
\displaystyle \frac{1 - \beta}{1 - \beta q - \beta^{k+1} p} = \frac{\sigma_{0}}{\sigma_k}, & \quad i > k.
\end{cases}
\end{align*}
\end{proof}

We thus obtain that the model satisfies condition $($PCLI1$)$.

\begin{lemma}
\label{lma:pcli1}
$g_{(1, i)}^k  > 1-\beta$, and hence condition $($PCLI1$)$ holds.
\end{lemma}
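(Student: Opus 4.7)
The plan is to invoke the closed form for the marginal work metric derived in Lemma~\ref{lma:mgeval}, namely
\[
g_{(1, i)}^k = \frac{\sigma_{(k-i)^+}}{\sigma_k},
\]
with $\sigma_j = 1 - \beta q - \beta^{j+1} p$, and show that the numerator is bounded below by $1-\beta$ while the denominator is strictly less than $1$.

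First I would record the monotonicity of $\sigma_j$ in $j$: since $0 < \beta < 1$ and $p > 0$, the sequence $\beta^{j+1} p$ is strictly decreasing in $j$, so $\sigma_j$ is strictly increasing in $j$. In particular, for every $i \geqslant 1$ and $k \geqslant 0$, one has $(k-i)^+ \geqslant 0$, hence
\[
\sigma_{(k-i)^+} \;\geqslant\; \sigma_0 \;=\; 1 - \beta q - \beta p \;=\; 1 - \beta(q+p) \;=\; 1 - \beta.
\]
Next I would bound the denominator: because $p > 0$ (recall $p = \lambda \mu$ with $\lambda, \mu > 0$), one has $\beta q + \beta^{k+1} p > 0$, so $\sigma_k = 1 - \beta q - \beta^{k+1} p < 1$.

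Combining the two estimates yields
\[
g_{(1, i)}^k \;=\; \frac{\sigma_{(k-i)^+}}{\sigma_k} \;\geqslant\; \frac{1-\beta}{\sigma_k} \;>\; 1 - \beta \;>\; 0,
\]
which proves the lemma and establishes condition (PCLI1). There is no real obstacle here: the lemma is essentially a one-line consequence of Lemma~\ref{lma:mgeval} together with the observation that $\sigma_j$ attains its minimum $1-\beta$ at $j=0$ and is strictly less than $1$ for every $j$. The only mild subtlety is to make sure $p > 0$ is used to get the strict bound $\sigma_k < 1$, which is guaranteed by the standing assumption $0 < \lambda, \mu \leqslant 1$ imposed at the beginning of Section~\ref{s:vtpictp}.
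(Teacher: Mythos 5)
Your proposal is correct and follows essentially the same route as the paper: both reduce the claim to the closed form $g_{(1,i)}^k = \sigma_{(k-i)^+}/\sigma_k$ from Lemma~\ref{lma:mgeval}, lower-bound this by $\sigma_0/\sigma_k = (1-\beta)/\sigma_k$ (you via monotonicity of $\sigma_j$ in $j$, the paper via monotonicity of $g_{(1,i)}^k$ in $i$ plus the constancy for $i>k$, which are the same fact under $j=(k-i)^+$), and conclude with the strict bound $\sigma_k<1$ coming from $p>0$.
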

\begin{proof}
It is immediately apparent from  Lemma~\ref{lma:mgeval} that $g_{(1, i)}^k$ is decreasing 
in $i$ for $i \leqslant k$, and hence 
\[
g_{(1, i)}^k \geqslant g_{(1, k)}^k = \frac{1 - \beta }{1 - \beta (1 - (1-\beta^k) p)} > 1 - \beta, \quad i \leqslant k.
\]

As for $i > k$, Lemma~\ref{lma:mgeval} shows that $g_{(1, i)}^k = g_{(1, k)}^k$, which completes the proof.
\end{proof}

\subsection{Cost Metric Analysis}
\label{s:cman}
We continue by analyzing cost and marginal cost metrics under the $k$-policy. As above,  we will find it convenient to include the fictitious AoI value $i = 0$.

We start {by} analyzing the cost metric, which satisfies the following linear equations:
\begin{equation}
\label{eq:F0ik}
F_{(0, i)}^k = c_i + \beta (1-\lambda) F_{(0, i+1)}^k + \beta \lambda F_{(1, i+1)}^k, \quad i  \geqslant 0
\end{equation}
and
\begin{equation}
\label{eq:F1ik}
F_{(1, i)}^k = 
\begin{cases}
c_i + \beta (1-\lambda) F_{(0, i+1)}^k + \beta \lambda F_{(1, i+1)}^k, & \enspace i \leqslant k \\
c_i + \beta (1-\lambda) \mu F_{(0, 1)}^k + \beta p F_{(1, 1)}^k  & \\
\quad + \beta (1-\lambda) (1-\mu) F_{(0, i+1)}^k + \beta \lambda (1-\mu) F_{(1, i+1)}^k, &  \enspace i > k.
\end{cases}
\end{equation}

In the future, to  simplify notation, we will find it convenient to define, for $i \geqslant 1$,
\begin{equation}
\label{eq:Didef}
C_i(q) \triangleq 
\begin{cases} \displaystyle 
\sum_{j=1}^{\infty} (\beta q)^{j-1} c_{i-1+j}, & \quad \textup{if} \quad q > 0 \\
\displaystyle
c_{i}, & \quad \textup{if} \quad q = 0.
\end{cases}
\end{equation}

{We further} define quantities $\Phi_{i}^k$ as follows. For $i = k$, 
\[
\Phi_{k}^k \triangleq 
 \frac{1}{\sigma_k} \Bigg[p \sum_{j=0}^{k-1} \beta^{j+1} c_j  +  (1-\beta q) \big(c_{k}   +  \beta C_{k+1}(q)\big)\Bigg];
\]
for $i < k$,
\[
\Phi_{i}^k \triangleq \sum_{j=i}^{k-1} \beta^{j-i} c_j + \beta^{k-i} \Phi_{k}^k;
\]
and, 
for $i > k$,
\begin{align*}
\Phi_{i}^k \triangleq 
 \begin{cases} \displaystyle (\beta q)^{k-i}
\bigg(\Phi_{k}^k - \beta p \frac{1-(\beta q)^{i-k}}{1 - \beta q} \Phi_{0}^k   - c_{k}  & \\
\displaystyle \qquad \qquad - \beta \sum_{j=1}^{i-k-1}  (\beta q)^{j-1} c_{k+j} - \beta p (\beta q)^{i-k-1} c_{i}\bigg), & \textup{ if } q > 0 \\
\displaystyle c_i + \beta c_{i+1} + \beta \Phi_{0}^k, & \textup{ if } q = 0.
\end{cases}
\end{align*}

The following result gives closed-form expressions for evaluating the {cost metric} $F_{(b, i)}^k$.
\begin{lemma}
\label{lma:Feval} We have
\[
F_{(0, i)}^k = 
\Phi_{i}^k
\quad \textup{and} \quad
F_{(1, i)}^k = 
\begin{cases}
F_{(0, i)}^k, & \quad i \leqslant k \\
\mu c_i + \mu F_{(0, 0)}^k + (1-\mu) F_{(0, i)}^k, &  \quad i > k.
\end{cases}
\]
\end{lemma}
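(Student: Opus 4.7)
The plan is to reduce the system of Bellman equations to two scalar unknowns---$F_{(0,0)}^k$ (with the fictitious extension $c_0 \triangleq 0$) and $F_{(0,k)}^k$---and then solve a $2\times 2$ linear system whose determinant will turn out to be proportional to $\sigma_k$.

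First, I would establish the two structural relations for $F_{(1,i)}^k$. For $i\leqslant k$ the $k$-policy takes the passive action in state $(1,i)$, so the Bellman equation~(\ref{eq:F1ik}) for $F_{(1,i)}^k$ coincides with~(\ref{eq:F0ik}) for $F_{(0,i)}^k$; uniqueness of the solution under Proposition~\ref{pro:prop6105a} then forces $F_{(1,i)}^k = F_{(0,i)}^k$. For $i>k$, adopting the convention $c_0 \triangleq 0$ and defining $F_{(0,0)}^k$ via~(\ref{eq:F0ik}) at $i=0$ so that $F_{(0,0)}^k = \beta(1-\lambda)F_{(0,1)}^k + \beta\lambda F_{(1,1)}^k$, the active-action branch of~(\ref{eq:F1ik}) splits into a success piece (probability $\mu$) reaching the post-transmission states $(0,1)$ and $(1,1)$---whose combined discounted value is exactly $F_{(0,0)}^k$---and a failure piece (probability $1-\mu$) matching~(\ref{eq:F0ik}) evaluated at $i$; a short arithmetic check then yields $F_{(1,i)}^k = \mu c_i + \mu F_{(0,0)}^k + (1-\mu) F_{(0,i)}^k$.

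Substituting these relations back into~(\ref{eq:F0ik}) decouples the recursion for $F_{(0,\cdot)}^k$. For $i<k$ it collapses to $F_{(0,i)}^k = c_i + \beta F_{(0,i+1)}^k$, which iterates to $\sum_{j=i}^{k-1}\beta^{j-i} c_j + \beta^{k-i} F_{(0,k)}^k$, matching $\Phi_i^k$ once $F_{(0,k)}^k = \Phi_k^k$ is confirmed. For $i>k$ it becomes $F_{(0,i)}^k = c_i + \beta p\,c_{i+1} + \beta p\, F_{(0,0)}^k + \beta q\, F_{(0,i+1)}^k$; when $q=0$ this terminates after one step, and when $q>0$ Assumption~\ref{ass:ci}(ii) legitimizes forward iteration to a closed form in $C_i(q)$ and $F_{(0,0)}^k$, namely $F_{(0,i)}^k = c_i + \beta C_{i+1}(q) + \beta p F_{(0,0)}^k/(1-\beta q)$. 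To close the loop I would extend the $i<k$ recursion down to $i=0$ (using $c_0 = 0$ and $F_{(0,0)}^k = \beta F_{(0,1)}^k$) to get $F_{(0,0)}^k = \sum_{j=1}^{k}\beta^j c_j + \beta^k F_{(0,k)}^k$, and evaluate the $i>k$ form at $i=k$ to express $F_{(0,k)}^k$ as an affine function of $F_{(0,0)}^k$ with slope $\beta p/(1-\beta q)$. Back-substitution produces a linear equation in $F_{(0,0)}^k$ whose determinant is $\sigma_k/(1-\beta q)$, producing the normalizing factor in $\Phi_k^k$.

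The main obstacle is the algebraic bookkeeping in this last step: verifying that the resolved $F_{(0,0)}^k$, fed back into the unrolled forward recursion for $i>k$ and simplified using the telescoping identity $C_i(q) = c_i + \beta q C_{i+1}(q)$, reproduces the precise expression stated for $\Phi_i^k$ in the $i>k$ branch---including the separate handling of the $q=0$ case. Everything else is a routine linear-algebra consequence of the Bellman equations, modulo Proposition~\ref{pro:prop6105a} which ensures that these equations admit a unique solution in the weighted-norm class under consideration.
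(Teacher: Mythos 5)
Your proposal follows the paper's own proof essentially step for step: express $F_{(1,i)}^k$ in terms of $F_{(0,\cdot)}^k$ via the Bellman equations (\ref{eq:F0ik})--(\ref{eq:F1ik}), reduce to a decoupled backward recursion for $i<k$ and forward recursion for $i\geqslant k$, iterate each to identify $F_{(0,0)}^k$ and $F_{(0,k)}^k$ as the two pivotal unknowns, invoke Assumption~\ref{ass:ci}(ii) to pass to the limit in the forward iteration, and close the resulting $2\times 2$ linear system whose determinant is $\sigma_k/(1-\beta q)$. The only cosmetic differences are that you make the convention $c_0\triangleq 0$ explicit (the paper leaves it implicit in the ``fictitious'' state $i=0$) and that you justify $F_{(1,i)}^k=F_{(0,i)}^k$ for $i\leqslant k$ by appealing to uniqueness under Proposition~\ref{pro:prop6105a}, whereas the paper simply observes the two Bellman equations coincide---both are valid.
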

\begin{proof}
\hl{See Appendix}~\ref{s:cmeval}.
\end{proof}

The next result evaluates the marginal cost metric $f_{(1, i)}^k$ in closed form. 

\begin{lemma}
\label{lma:mfeval}  We have
\begin{enumerate}[label=(\alph*)]
\item $f_{(1, i)}^k  = 
\mu \big(\Phi_{i}^k - \Phi_{0}^k - c_i\big);$
\item In particular, in the case $i = k$,
\[
f_{(1, i)}^i   = 
\frac{\mu}{\sigma_i}\bigg[\beta (1 - \beta^{i})(1-\beta q)  C_{i+1}(q) - (1 - \beta) \sum_{j=0}^{i} \beta^{j} c_j\bigg].
\]
\end{enumerate}
\end{lemma}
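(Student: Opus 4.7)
The plan is to establish (a) by comparing the two one-step expansions that define $f_{(1,i)}^k$ and recognizing them in terms of $F_{(0,\cdot)}^k = \Phi_\cdot^k$, and then to derive (b) by specializing $k=i$ and carrying out a book-keeping calculation using the closed forms from Lemma~\ref{lma:Feval}.

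For part (a), I would first observe that the passive action in the controllable state $(1,i)$ induces exactly the same transitions as the (only) action in the uncontrollable state $(0,i)$; see~(\ref{eq:prob0}). Hence, expanding over the first slot,
\[
F_{(1,i)}^{\langle 0, k\rangle} = c_i + \beta(1-\lambda) F_{(0,i+1)}^k + \beta\lambda F_{(1,i+1)}^k = F_{(0,i)}^k = \Phi_i^k,
\]
by~(\ref{eq:F0ik}). Next, I would expand $F_{(1,i)}^{\langle 1, k\rangle}$ via~(\ref{eq:prob1}), group the four terms into the two expectations
$[(1-\lambda)F_{(0,1)}^k + \lambda F_{(1,1)}^k]$ and $[(1-\lambda)F_{(0,i+1)}^k + \lambda F_{(1,i+1)}^k]$, weighted by $\mu$ and $1-\mu$ respectively, and apply~(\ref{eq:F0ik}) at $j=0$ and $j=i$ to rewrite these brackets as $\beta^{-1}(F_{(0,0)}^k - c_0)$ and $\beta^{-1}(F_{(0,i)}^k - c_i)$. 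With the convention $c_0 = 0$ that is implicit in the fictitious extension (and can be checked for consistency against $F_{(1,i)}^k = \mu c_i + \mu F_{(0,0)}^k + (1-\mu) F_{(0,i)}^k$ in Lemma~\ref{lma:Feval} for $i>k$), this yields $F_{(1,i)}^{\langle 1, k\rangle} = \mu c_i + \mu\Phi_0^k + (1-\mu)\Phi_i^k$. Subtracting gives $f_{(1,i)}^k = \mu(\Phi_i^k - \Phi_0^k - c_i)$, proving (a).

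For part (b), I would set $k=i$ in (a) and use the recursion $\Phi_j^k = \sum_{\ell=j}^{k-1}\beta^{\ell - j} c_\ell + \beta^{k-j}\Phi_k^k$ at $j=0$, giving $\Phi_0^i = \sum_{j=0}^{i-1}\beta^j c_j + \beta^i \Phi_i^i$, so that
\[
\Phi_i^i - \Phi_0^i - c_i = (1-\beta^i)\Phi_i^i - \sum_{j=0}^{i-1}\beta^j c_j - c_i.
\]
I would then multiply by $\sigma_i$, substitute the closed form $\sigma_i \Phi_i^i = p\sum_{j=0}^{i-1}\beta^{j+1} c_j + (1-\beta q)(c_i + \beta C_{i+1}(q))$, and group the result into three blocks: a $C_{i+1}(q)$-block, a $c_i$-block, and a $\sum_{j=0}^{i-1}\beta^j c_j$-block. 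The first block immediately yields $\beta(1-\beta^i)(1-\beta q) C_{i+1}(q)$. In the last block, the coefficient is $(1-\beta^i)\beta p - \sigma_i = \beta p - \beta^{i+1}p - 1 + \beta q + \beta^{i+1}p = \beta p + \beta q - 1 = -(1-\beta)$, using $p+q=1$. In the $c_i$-block, the coefficient is $(1-\beta^i)(1-\beta q) - \sigma_i = -\beta^i + \beta^{i+1}(p+q) = -(1-\beta)\beta^i$. Assembling these pieces gives precisely $\beta(1-\beta^i)(1-\beta q) C_{i+1}(q) - (1-\beta)\sum_{j=0}^i \beta^j c_j$, and dividing by $\sigma_i$ (and multiplying by $\mu$) produces (b).

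The main obstacle is not conceptual but careful algebraic bookkeeping: in part (b), after multiplying through by $\sigma_i$, one must verify that three apparently distinct contributions to each $c_j$-coefficient collapse to the same prefactor $-(1-\beta)\beta^j$. The two identities $p+q=1$ and $\sigma_i = 1-\beta q - \beta^{i+1}p$ are what make this collapse work, so the proof ultimately reduces to tracking these cancellations. Part (a), by contrast, is essentially a one-line consequence of the Bellman equations once the $c_0=0$ convention is noted.
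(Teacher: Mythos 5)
Your proposal is correct and follows essentially the same route as the paper. Part (a) is established exactly as in the paper's Appendix B: the expansion $F_{(1,i)}^{\langle 0,k\rangle}=F_{(0,i)}^k$ and the identification $F_{(1,i)}^{\langle 1,k\rangle}=\mu c_i+\mu F_{(0,0)}^k+(1-\mu)F_{(0,i)}^k$ (the paper simply quotes the latter from the formula in Lemma~\ref{lma:Feval}, valid for $i>k$ there but applicable for all $i$ once one observes that $\langle 1,k\rangle$ always takes the active action initially), and part (b) is the same $\sigma_i$-multiplied bookkeeping using $\Phi_0^i=\sum_{j=0}^{i-1}\beta^j c_j+\beta^i\Phi_i^i$ and the closed form for $\sigma_i\Phi_i^i$, with the same cancellations via $p+q=1$ and the definition of $\sigma_i$. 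You are actually slightly more explicit than the paper in one respect: you flag that the identity for $F_{(1,i)}^{\langle 1,k\rangle}$ relies on the convention $c_0=0$ for the fictitious AoI value $i=0$; the paper uses this convention silently (it is also what reconciles the sum $\sum_{j=0}^i\beta^j c_j$ in Lemma~\ref{lma:mfeval}(b) with the sum $\sum_{j=1}^i\beta^j c_j$ appearing in the MP index of Lemma~\ref{lma:dmpi}).
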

\begin{proof}
\hl{See Appendix}~\ref{s:cmeval}. 
\end{proof}

\subsection{{MP} Index Analysis and Condition \highlighting{$($PCLI2$)$}} 
\label{s:mpmapcli2}
{We next give a closed formula for the discounted MP index $m_{(1, i)}$ defined in~(\ref{eq:mpi}).}
 
 \begin{lemma}
\label{lma:dmpi}
We have
\begin{equation}
\label{eq:dmpi}
m_{(1, i)} = 
\begin{cases} \displaystyle
\mu \bigg[\frac{\beta (1 - \beta^{i})(1-\beta q)}{1 - \beta}   C_{i+1}(q) - \sum_{j=1}^{i} \beta^{j} c_j\bigg], \quad \textup{if} \quad q > 0 \\ \\ \displaystyle
\frac{\beta (1 - \beta^{i})}{1 - \beta} c_{i+1} - \sum_{j=1}^{i} \beta^{j} c_j, \quad \textup{if} \quad q = 0.
\end{cases}
\end{equation}
\end{lemma}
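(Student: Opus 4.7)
The proof is a direct substitution: by definitions~(\ref{eq:mpm}) and~(\ref{eq:mpi}), $m_{(1,i)} = m_{(1,i)}^i = f_{(1,i)}^i / g_{(1,i)}^i$, so the plan is simply to divide the closed-form expressions already obtained in Lemmas~\ref{lma:mfeval}(b) and~\ref{lma:mgeval}.

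First I would evaluate the denominator $g_{(1,i)}^i$ by specializing Lemma~\ref{lma:mgeval} to $k = i$. This yields $g_{(1,i)}^i = \sigma_{(i-i)^+}/\sigma_i = \sigma_0/\sigma_i$. Since $p + q = 1$, the definition~(\ref{eq:di}) gives $\sigma_0 = 1 - \beta q - \beta p = 1 - \beta$, so $g_{(1,i)}^i = (1-\beta)/\sigma_i$. This is the critical observation that makes the formula clean.

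Next I would form the ratio. Dividing the expression for $f_{(1,i)}^i$ from Lemma~\ref{lma:mfeval}(b) by $g_{(1,i)}^i$, the factor $\sigma_i$ in the denominator of $f_{(1,i)}^i$ cancels against the $\sigma_i$ coming from inverting $g_{(1,i)}^i$, and the factor $(1-\beta)$ multiplying the second bracketed term in $f_{(1,i)}^i$ is absorbed by the $1/(1-\beta)$ contributed by $1/g_{(1,i)}^i$, while the first bracketed term acquires an extra $1/(1-\beta)$. Adopting the convention $c_0 = 0$ for the fictitious AoI value introduced in Section~\ref{s:wmacpcli1} (so that the sum $\sum_{j=0}^i \beta^j c_j$ in Lemma~\ref{lma:mfeval}(b) is the same as $\sum_{j=1}^i \beta^j c_j$), this yields the stated formula in the $q > 0$ case of~(\ref{eq:dmpi}).

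Finally, the $q = 0$ case follows by specialization: by~(\ref{eq:Didef}), $C_{i+1}(q)$ collapses to $c_{i+1}$; the factor $(1 - \beta q)$ becomes $1$; and $q = 0$ forces $p = \lambda \mu = 1$, hence $\mu = 1$, so the leading $\mu$ disappears. The entire argument is a routine algebraic verification with no serious obstacle; the only mildly subtle point is tracking the treatment of $c_0$ and verifying that the $\sigma_i$ factors cancel cleanly so that the final MP index formula has a transparent structure.
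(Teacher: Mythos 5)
Your proposal is correct and takes the same route as the paper, which also simply divides the marginal cost formula of Lemma~\ref{lma:mfeval}(b) by the marginal work formula of Lemma~\ref{lma:mgeval} (with $k=i$) and simplifies using~(\ref{eq:Didef}). You have correctly filled in the two details the paper leaves implicit, namely the cancellation via $\sigma_0 = 1-\beta$ and the tacit convention $c_0 = 0$ for the fictitious AoI state, which is indeed what makes $\sum_{j=0}^{i}\beta^j c_j = \sum_{j=1}^{i}\beta^j c_j$ and is also needed for the identity $F_{(1,i)}^{\langle 1,k\rangle} = \mu c_i + \mu F_{(0,0)}^k + (1-\mu)F_{(0,i)}^k$ used in Appendix~\ref{s:cmeval}.
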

\begin{proof}
The result follows straightforwardly from Lemmas~\ref{lma:mgeval} and~\ref{lma:mfeval}, using~(\ref{eq:Didef}).
\end{proof}

{We are now ready to establish the PCL-indexability condition $($PCLI2$)$.}
\begin{lemma}
\label{lma:dmpicpcli2}
Under Assumption $\ref{ass:ci}$, $m_{(1, i)}$ is monotone nondecreasing, so condition $($PCLI2$)$ holds.
\end{lemma}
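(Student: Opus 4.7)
The plan is to establish monotonicity by computing the increment $\Delta_i \triangleq m_{(1,i+1)} - m_{(1,i)}$ directly from the closed form given in Lemma~\ref{lma:dmpi}, and showing that it is nonnegative under the assumption that $c_j$ is nondecreasing.

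First I would treat the generic case $q > 0$. From Lemma~\ref{lma:dmpi}, writing $\phi(i) \triangleq m_{(1,i)}/\mu$, the difference $\phi(i+1) - \phi(i)$ decomposes into two pieces: the change in the $C$-term and the extra summand $\beta^{i+1} c_{i+1}$. I would use the one-step recursion $C_{i+1}(q) = c_{i+1} + \beta q\, C_{i+2}(q)$, immediate from definition~(\ref{eq:Didef}), to express $C_{i+1}(q)$ in terms of $C_{i+2}(q)$, and regroup. After collecting the coefficients of $C_{i+2}(q)$ and of $c_{i+1}$, I expect (and would verify by a short algebraic manipulation) that the coefficient of $C_{i+2}(q)$ becomes exactly $\beta(1-\beta q)\sigma_i /(1-\beta)$, where $\sigma_i = 1-\beta q - \beta^{i+1}p$ as in~(\ref{eq:di}); this is the key algebraic identity that ``selects'' the $\sigma_i$ factor.

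Next, I would use Assumption~\ref{ass:ci}(i) to bound $C_{i+2}(q) \geqslant c_{i+1}\sum_{j=1}^\infty (\beta q)^{j-1} = c_{i+1}/(1-\beta q)$, since every term $c_{i+1+j}$ in the defining series dominates $c_{i+1}$. Substituting this lower bound should make the whole expression collapse: the factor $(1-\beta q)$ cancels, and after using $1 - \beta q - \beta p = 1 - \beta$ the combination of terms proportional to $c_{i+1}$ simplifies to $\beta^{i+1} c_{i+1}$, which exactly cancels the $-\beta^{i+1} c_{i+1}$ contribution from $\sum_{j=1}^{i+1}\beta^j c_j - \sum_{j=1}^{i}\beta^j c_j$. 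This gives $\Delta_i \geqslant 0$.

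For the degenerate case $q = 0$ (deterministic arrivals, reliable channel), I would use the second branch of~(\ref{eq:dmpi}) and perform the analogous direct computation, which is simpler because it only involves $c_{i+1}$ and $c_{i+2}$; invoking $c_{i+2} \geqslant c_{i+1}$ yields $\Delta_i \geqslant 0$ immediately by the same cancellation pattern.

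The main obstacle, and the only real content of the argument, is the algebraic verification that the $C_{i+2}(q)$ coefficient reduces to $\beta(1-\beta q)\sigma_i/(1-\beta)$; once this is in hand, the monotonicity of $c_j$ furnishes the crucial lower bound on $C_{i+2}(q)$, and the remaining arithmetic is forced. Note that Assumption~\ref{ass:ci}(ii) ensures $C_{i+1}(q) < \infty$, so all manipulations are well defined.
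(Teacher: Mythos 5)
Your proposal is correct and follows essentially the same route as the paper: compute the index difference, apply the one-step recursion $C_{i+1}(q)=c_{i+1}+\beta q\,C_{i+2}(q)$ so that the tail coefficient collapses to $\beta(1-\beta q)\sigma_i/(1-\beta)$, then use monotonicity of $c$ to lower-bound the tail term by a geometric sum, at which point the $(1-\beta q)$ cancels and the expression reduces to exactly zero. The only cosmetic differences are that the paper writes the increment as $m_{(1,i)}-m_{(1,i-1)}$ rather than $m_{(1,i+1)}-m_{(1,i)}$, and the paper runs the argument uniformly without a separate $q=0$ branch (the geometric sum and recursion degenerate gracefully when $q=0$).
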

\begin{proof}
We have, 
using that $C_{i}(q) = c_{i} + \beta q C_{i+1}(q)$ and $c_i$ is nondecreasing, 
\begin{align*}
(1 - \beta^{i}) C_{i+1}(q) - 
(1 - \beta^{i-1}) C_{i}(q) & = 
 (1 - \beta^{i}) C_{i+1}(q) - 
 (1-\beta^{i-1}) \big(c_{i} + \beta q C_{i+1}(q)\big) \\
 & = \big(1-\beta^i  - (1-\beta^{i-1}) \beta q\big) C_{i+1}(q) - (1-\beta^{i-1}) c_{i} \\
 & = \big(1 - \beta q - \beta^{i} p\big) C_{i+1}(q) - (1-\beta^{i-1}) c_{i} \\
 & \geqslant \big(1 - \beta q - \beta^{i} p\big)  \sum_{j=1}^{\infty} (\beta q)^{j-1} c_{i} - (1-\beta^{i-1}) c_{i} \\
 & = \frac{(1-\beta ) \beta^{i-1}}{1-\beta  q} c_i.
\end{align*}

On the other hand,
\[
\sum_{j=1}^{i} \beta^{j} c_j - \sum_{j=1}^{i-1} \beta^{j} c_j = \beta^{i} c_i.
\]

Hence, we have, for $i \geqslant 2$,
\[
m_{(1, i)} - m_{(1, i-1)} \geqslant 
\frac{\mu}{1 - \beta}\bigg[\beta (1-\beta q)  \frac{(1-\beta ) \beta^{i-1}}{1-\beta  q} c_i - (1 - \beta) \beta^{i} c_i\bigg] = 0,
\]
which completes the proof.
\end{proof}

We thus obtain the following result.
\begin{proposition}
\label{pro:dwi} Under Assumption $\ref{ass:ci}$,
the discounted model is threshold-indexable, with Whittle index given by the MP index $m_{(1, i)}.$
\end{proposition}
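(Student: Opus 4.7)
The plan is to invoke the verification theorem (Theorem~\ref{the:verif}) and check that both PCL-indexability conditions have already been established in the preceding lemmas, so the proposition reduces to a one-line citation. Specifically, Theorem~\ref{the:verif} says that under condition $($PCLI1$)$, threshold-indexability is equivalent to $($PCLI2$)$, with the Whittle index then coinciding with the MP index $m_{(1,i)}$. Hence I only need to cite Lemma~\ref{lma:pcli1} for $($PCLI1$)$ and Lemma~\ref{lma:dmpicpcli2} for $($PCLI2$)$, and then appeal to Theorem~\ref{the:verif} to conclude.

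More concretely, I would first recall from Lemma~\ref{lma:pcli1} that $g_{(1,i)}^k > 1-\beta > 0$ for every AoI $i \geqslant 1$ and threshold $k \geqslant 0$, which is precisely $($PCLI1$)$ and also guarantees that the MP metric and MP index in~(\ref{eq:mpm})--(\ref{eq:mpi}) are well defined. Next, I would cite Lemma~\ref{lma:dmpicpcli2}, which shows that under Assumption~\ref{ass:ci} the MP index $m_{(1,i)}$ from Lemma~\ref{lma:dmpi} is monotone nondecreasing in $i$, establishing $($PCLI2$)$. Both ingredients required by Theorem~\ref{the:verif} are in place.

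Applying Theorem~\ref{the:verif} then yields simultaneously that (i) threshold policies are optimal for the single-user subproblem~(\ref{eq:ssulr}) for every $\nu \in \mathbb{R}$, and (ii) the critical charge $\nu_{(1,i)}^*$ exists and equals $m_{(1,i)}$. Since this is exactly the definition of threshold-indexability adopted in Section~\ref{s:iwi}, the proposition follows.

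The only subtle point I would flag, rather than a true obstacle, is the use of Proposition~\ref{pro:prop6105a} to ensure that optimality can indeed be sought within the class of stationary deterministic policies used in the statement of Theorem~\ref{the:verif}; this handles the unbounded cost/countable state complication and justifies that the threshold-policy comparisons underlying the MP calculus are meaningful. All the analytic work (work-metric positivity and MP-index monotonicity) has already been carried out, so no further computation is required in this proof.
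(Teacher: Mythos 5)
Your proposal is correct and matches the paper's own proof exactly: the paper's argument is precisely the one-line citation of Lemmas~\ref{lma:pcli1} and~\ref{lma:dmpicpcli2} followed by an appeal to Theorem~\ref{the:verif}. Your additional remark on Proposition~\ref{pro:prop6105a} is a reasonable clarification but not part of the paper's stated proof.
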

\begin{proof}
The result follows from Lemmas~\ref{lma:pcli1} and~\ref{lma:dmpicpcli2} by applying Theorem~\ref{the:verif}.
\end{proof}

\section{PCL-Indexability Analysis: Average Cost Criterion}
\label{s:pcliaac}
This section {applies} Theorem~\ref{the:verif} to the present model under the average cost criterion.  {This} entails verifying conditions $($PCLI1$)$ and $($PCLI2$)$ for  average metrics that are counterparts of the discounted metrics above. 
Note that the PCL-indexability theory under the average criterion, based on Laurent series expansions, is discussed in  
\cite[Section \hl{5}]{nmaap01},
 \cite[Section~\hl{6.5}]{nmmp02}, and 
\cite[Section \hl{5.2} ]{nmmor06}, upon which we draw for the following analyses.

\subsection{Work Metric Analysis and Condition \highlighting{$($PCLI1$)$}} 
\label{s:awmacpcli1}
We start with condition $($PCLI1$)$, namely the positivity of the marginal work metric under 
threshold policies. 
We start by evaluating the average work metric under the $k$-policy, 
\begin{equation}
\label{eq:Gkac}
G_{(b, i)}^k \triangleq \lim_{T \to \infty} \, \frac{1}{T} \Ex_{(b, i)}^k\Bigg[\sum_{t=0}^T A(t)\Bigg].
\end{equation}

The following result {evaluates $G_{(b, i)}^k$ in closed form, which shows that it does not 
depend on the initial state $(b, i)$; hence, we write it as $G^k$}. This follows from standard 
results {relating} discounted and average cost metrics in MDPs. See, e.g.,  \cite[Theorem 
\hl{8.10.7}]{put94}.  
{Note that, below, we incorporate the discount factor $\beta$ in the notation for the 
discounted metrics considered above to distinguish them from their average criterion 
counterparts. Thus, e.g., we write the discounted work metric as $G_{\beta, (b, i)}^k$.}
 
\begin{lemma}
\label{lma:aGeval}  
We have
\[
G^k = G_{(b, i)}^k = \lim_{\beta \to 1} \, (1 - \beta) G_{\beta, (b, i)}^k = \frac{\lambda }{k p + 1}.
\]
\end{lemma}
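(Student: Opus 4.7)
The plan is to avoid a direct stationary-distribution computation and instead derive $G^k$ as an Abelian limit of the discounted work metric already supplied by Lemma~\ref{lma:Geval}. Under the $k$-threshold policy the controlled process becomes a time-homogeneous Markov chain on $\{0,1\}\times\mathbb{N}$ that regenerates at any state of the form $(b,1)$ whenever a transmission attempt succeeds; because the joint probability of an attempt occurring and succeeding in a slot with AoI exceeding $k$ equals $p = \lambda\mu > 0$, the expected return time to the regeneration set is bounded by roughly $k + 1/p$ and the chain is positive recurrent. This justifies invoking Theorem 8.10.7 of Puterman~\cite{put94} (the reference already cited in the statement) to conclude both that $G^k_{(b,i)} = \lim_{\beta\to 1^{-}}(1-\beta)G^k_{\beta,(b,i)}$ and that this limit is independent of the initial state.

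The calculation then reduces to a single L'H\^opital-style limit. Every expression in Lemma~\ref{lma:Geval} is built from $\Gamma_k^k = \beta\lambda/\sigma_k$, where $\sigma_k = 1 - \beta q - \beta^{k+1}p$ vanishes at $\beta = 1$ precisely because $p + q = 1$. Differentiating numerator and denominator yields
\[
\lim_{\beta\to 1^{-}}\frac{1-\beta}{\sigma_k} = \frac{-1}{-q - (k+1)p}\Big|_{\beta=1} = \frac{1}{q + (k+1)p} = \frac{1}{1 + kp},
\]
so $\lim_{\beta\to 1^{-}}(1-\beta)\Gamma_k^k = \lambda/(1+kp)$.

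It remains only to check that each of the four cases of Lemma~\ref{lma:Geval} reproduces this same limit. For initial states $(0,i)$ and $(1,i)$ with $i \leqslant k$, the formula is $\beta^{k-i}\Gamma_k^k$ and the prefactor $\beta^{k-i} \to 1$; for $(0,i)$ with $i > k$ the formula equals $\Gamma_k^k$ outright; and for $(1,i)$ with $i > k$ the constant contribution satisfies $(1-\beta)\cdot 1 \to 0$, while $(1-\beta)\bigl[\mu\Gamma_0^k + (1-\mu)\Gamma_i^k\bigr] \to [\mu + (1-\mu)]\lambda/(1+kp) = \lambda/(1+kp)$. The main conceptual step is therefore not the algebra but the justification for the Abelian limit on an unbounded state space, which is precisely why the positive-recurrence observation in the first paragraph is essential; once that is secured, the rest is a uniform L'H\^opital computation across the four cases.
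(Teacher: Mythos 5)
Your proof is correct and follows essentially the same route as the paper: the paper's (very terse) proof also just invokes the Abelian/Tauberian limit via Theorem~8.10.7 of Puterman and then reads off the limit from the discounted formulae in Lemma~\ref{lma:Geval}. You add a useful extra layer of rigor by explicitly noting the positive recurrence of the controlled chain (via the regeneration at AoI $1$ with per-slot success probability $p>0$), which the paper leaves implicit, and your L'H\^opital computation of $\lim_{\beta\to 1}(1-\beta)/\sigma_k = 1/(1+kp)$ together with the four-case check is exactly the calculation the paper intends.
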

\begin{proof}
The result follows by using the expressions for $G_{\beta, (b, i)}^k$ in Lemma 
\ref{lma:Geval}.
\end{proof}

The next result {evaluates}  the average marginal work metric $g_{(1, i)}^k$ based on the 
above formulae for its discounted counterpart, which we now write as $g_{\beta, (1, i)}^k$.

\begin{lemma}
\label{lma:dmgeval} We have
\[
g_{(1, i)}^k  = \lim_{\beta \to 1} \, g_{\beta, (1, i)}^k = \frac{(k-i)^+ p+1}{k p+1}, \quad i \geqslant 1.
\]
\end{lemma}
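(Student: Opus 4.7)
The plan is to start from the closed-form expression for the discounted marginal work metric established in Lemma~\ref{lma:mgeval}, namely
\[
g_{\beta,(1,i)}^k = \frac{\sigma_{(k-i)^+}}{\sigma_k}, \qquad \sigma_m = 1 - \beta q - \beta^{m+1} p,
\]
and take the limit $\beta \to 1$. The key observation is that $p + q = 1$, so $\sigma_m \to 0$ as $\beta \to 1$ for every $m$; both numerator and denominator of $g_{\beta,(1,i)}^k$ therefore vanish, and the limit must be extracted via a first-order expansion of $\sigma_m$ around $\beta = 1$ (equivalently, one application of L'H\^opital's rule in $\beta$).

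Concretely, I would set $\epsilon \triangleq 1-\beta$ and Taylor-expand
\[
\sigma_m = 1 - (1-\epsilon)q - (1-\epsilon)^{m+1} p = \epsilon\bigl[q + (m+1)p\bigr] + O(\epsilon^2) = \epsilon\bigl[1 + mp\bigr] + O(\epsilon^2),
\]
using $p+q=1$ to cancel the order-$1$ term. Dividing the expansion for $\sigma_{(k-i)^+}$ by that for $\sigma_k$, the $\epsilon$ factors cancel and the $O(\epsilon^2)$ remainders vanish in the limit, giving
\[
\lim_{\beta \to 1} g_{\beta,(1,i)}^k = \frac{1 + (k-i)^+ p}{1 + kp}.
\]
Finally, the identification of this limit with the average marginal work metric $g_{(1,i)}^k$ defined from \eqref{eq:Gkac} follows from the standard Abelian relation between discounted and average metrics in MDPs, exactly as invoked in the proof of Lemma~\ref{lma:aGeval} (cf.\ Theorem 8.10.7 in Puterman~\cite{put94}); the decoupled single-user subproblem is well-behaved under Assumption~\ref{ass:ci}, so this passage to the limit is legitimate.

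There is no real obstacle here beyond the routine Taylor expansion; the only subtlety worth noting explicitly is that the vanishing of the leading-order term in $\sigma_m$ is precisely what makes the ratio nontrivial, and that the resulting limit is independent of $\beta$ and consistent with the intuition that, under the $k$-policy, the long-run rate of active actions differs between an initial controllable state with $i \leqslant k$ (where the first attempt is deferred) and $i > k$ (where it is immediate), in the proportion $(1 + (k-i)^+ p) : (1 + kp)$.
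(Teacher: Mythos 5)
Your proposal is correct and takes the same route as the paper: the paper's proof simply states that the result "follows by using the expression for $g_{\beta,(1,i)}^k$ in Lemma~\ref{lma:mgeval}," leaving the first-order expansion of $\sigma_m$ around $\beta=1$ and the Abelian passage from discounted to average metrics implicit, both of which you spell out correctly.
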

\begin{proof}
The result follows by using the expression for $g_{\beta, (1, i)}^k$ in Lemma
\ref{lma:mgeval}.
\end{proof}

We thus obtain satisfaction of condition $($PCLI1$)$ under the average criterion.

\begin{lemma}
\label{lma:acpcli1}
$g_{(1, i)}^k  > 0$, and hence condition $($PCLI1$)$ holds.
\end{lemma}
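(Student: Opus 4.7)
The plan is to simply invoke the closed-form formula for the average marginal work metric that was just established in Lemma~\ref{lma:dmgeval}, and observe that both the numerator and the denominator of that expression are manifestly positive. Specifically, Lemma~\ref{lma:dmgeval} gives
\[
g_{(1,i)}^k = \frac{(k-i)^+ p + 1}{k p + 1}, \quad i \geqslant 1,
\]
and since $p = \lambda \mu > 0$ by the standing assumption that $\lambda > 0$ and $\mu > 0$, while $(k-i)^+ \geqslant 0$ and $k \geqslant 0$, both the numerator and denominator are at least $1$ (and strictly positive). Hence the ratio is strictly positive, which is exactly condition $($PCLI1$)$ in the average-criterion setting.

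In a sense there is no real obstacle here: the content of this lemma is already absorbed in the closed-form evaluation of $g_{(1,i)}^k$ from the previous lemma, which in turn rests on taking the limit $\beta \to 1$ in the expression derived in Lemma~\ref{lma:mgeval}. The only thing one might worry about is whether the lower bound could degenerate, but the bound $g_{(1,i)}^k \geqslant 1/(kp+1) > 0$ is uniform in $i$ for each fixed threshold $k$, paralleling the uniform discounted lower bound $g_{\beta,(1,i)}^k > 1-\beta$ from Lemma~\ref{lma:pcli1}. Thus the proof consists of a one-line appeal to Lemma~\ref{lma:dmgeval} together with the positivity of $p$.
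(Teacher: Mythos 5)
Your proof is correct and is essentially the paper's own argument: the paper's proof is a one-line appeal to the closed form from Lemma~\ref{lma:dmgeval}, and you spell out the same positivity observation in slightly more detail. No discrepancy.
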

\begin{proof}
The result follows directly from Lemma~\ref{lma:dmgeval}.
\end{proof}

\subsection{Cost Metric Analysis}
\label{s:dcman}
We continue by analyzing the average cost and marginal cost metrics under the $k$-policy 
for $k \geqslant 0$, starting with the average
cost metric, given by
\begin{equation}
\label{eq:Fkac}
F_{(b, i)}^k \triangleq \lim_{T \to \infty} \, \frac{1}{T} \Ex_{(b, i)}^k\Bigg[\sum_{t=0}^T c_{X(t)}\Bigg].
\end{equation}

In the future, we will find it convenient to define (cf.\ (\ref{eq:Didef})), for $i \geqslant 1$,
\begin{equation}
\label{eq:dDidef}
C_i(q) \triangleq 
\begin{cases} \displaystyle 
\sum_{j=1}^{\infty} q^{j-1} c_{i-1+j}, & \quad \textup{if} \quad q > 0 \\
\displaystyle
c_{i}, & \quad \textup{if} \quad q = 0,
\end{cases}
\end{equation}

Let us now define quantities $\Phi^k$ as follows: for any $i \geqslant 0$,
\[
\Phi^k \triangleq \lim_{\beta \to 1} \, (1-\beta) \Phi_{\beta, i}^k = 
 \frac{1}{k p + 1} \Bigg[p \sum_{j=0}^{k-1} c_j  +  p \big(c_{k}   +  \beta C_{k+1}(q)\big)\Bigg].
\]

The following result {evaluates $F_{(b, i)}^k$ in closed form, which shows that it does not 
depend on the initial state $(b, i)$; hence, we write it as $F^k$}. This follows from standard 
results (see, e.g., \cite[Theorem \hl{8.10.7} ]{put94}). Below, we denote the discounted cost 
metric as $F_{\beta, (b, i)}^k$. 

\begin{lemma}
\label{lma:aFeval}  We have
\[
F^k = F_{(b, i)}^k = \lim_{\beta \to 1} \, (1-\beta) F_{\beta, (b, i)}^k = 
\Phi^k.
\]
\end{lemma}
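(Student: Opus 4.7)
The plan is to decompose the chain of equalities into three tasks and handle them in order: (a) state-independence of $F_{(b,i)}^k$, (b) the vanishing-discount identification $F_{(b,i)}^k = \lim_{\beta \to 1}(1-\beta) F_{\beta,(b,i)}^k$, and (c) the explicit evaluation via Lemma~\ref{lma:Feval}. For (a), I would note that under the $k$-threshold policy the Markov chain on $\mathcal{Y}$ is unichain: from any state, once the AoI exceeds $k$ a freshly generated packet is transmitted with success probability $\mu > 0$ and resets AoI to $1$, so the state $(0,1)$ is reachable from every initial condition. Unichain MDP theory then forces the long-run average cost to be state-independent. For (b), I would invoke Theorem~8.10.7 of Puterman~\cite{put94}, whose hypotheses are satisfied using the same weight function $w_{(b,i)} = q^{-i}$ already introduced in the proof of Proposition~\ref{pro:prop6105a}, together with Assumption~\ref{ass:ci}(ii).

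For (c), Lemma~\ref{lma:Feval} gives $F_{\beta,(0,i)}^k = \Phi_{\beta,i}^k$, and since the limit is state-independent it suffices to evaluate
\[
\lim_{\beta \to 1}(1-\beta)\Phi_{\beta,k}^k = \lim_{\beta \to 1} \frac{1-\beta}{\sigma_k}\Bigg[p\sum_{j=0}^{k-1}\beta^{j+1} c_j + (1-\beta q)\bigl(c_k + \beta C_{k+1}(q)\bigr)\Bigg].
\]
The denominator $\sigma_k = 1 - \beta q - \beta^{k+1}p$ vanishes as $\beta\to 1$ with derivative $-(q+(k+1)p) = -(kp+1)$ at $\beta=1$ (using $p+q=1$), so by L'H\^opital $(1-\beta)/\sigma_k \to 1/(kp+1)$. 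Termwise, $1-\beta q \to p$ and the bracket converges to $p\sum_{j=0}^{k-1} c_j + p\bigl(c_k + C_{k+1}(q)\bigr)$, where $C_{k+1}(q)$ is the average-criterion version defined in~(\ref{eq:dDidef}); the interchange of limit with the infinite series defining $C_{k+1}(q)$ is legitimate by dominated convergence under Assumption~\ref{ass:ci}(ii), which ensures $\sum_j q^{j-1} c_{k+j} < \infty$. Combining these pieces reproduces exactly the defining expression of $\Phi^k$.

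The main obstacle is the rigorous verification of the hypotheses of Puterman's Theorem~8.10.7 on a countably infinite state space with unbounded costs, namely the uniform integrability that ties the vanishing-discount limit to the average cost. I would handle this concisely by noting that the weight function $w_{(b,i)} = q^{-i}$ already verified in Proposition~\ref{pro:prop6105a} controls cost growth against the geometric decay of the AoI distribution under the $k$-policy, exactly as the paper's own treatment suggests. The remaining steps are a standard L'H\^opital computation and termwise passage to the limit, neither of which presents any substantive difficulty.
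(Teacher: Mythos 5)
Your proposal is correct and takes essentially the same route as the paper: invoke Puterman's Theorem~8.10.7 for the vanishing-discount identification and then pass to the limit in the closed-form expressions of Lemma~\ref{lma:Feval}. You fill in the computational details the paper leaves implicit (the unichain argument for state-independence, the L'H\^opital step for $(1-\beta)/\sigma_k \to 1/(kp+1)$, and the dominated-convergence passage in the series defining $C_{k+1}(q)$), all of which are correct and match the paper's intended (and slightly typo'd) formula for $\Phi^k$.
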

\begin{proof}
The result follows by using the formulae for $F_{\beta, (b, i)}^k$ in Lemma 
\ref{lma:Feval}.
\end{proof}

The next result gives a closed-form expression for  the average marginal cost metric $f_{(1, 
i)}^i$, which is the only one needed for our analyses. Below, we denote  the discounted 
marginal cost metric as $f_{\beta, (1, i)}^k$. 

\begin{lemma}
\label{lma:amfeval} We have
\[
f_{(1, i)}^i   = \lim_{\beta \to 1} \, f_{\beta, (1, i)}^i = 
\frac{\mu}{i p+1}\bigg(i  p  \, C_{i+1}(q) -  \sum_{j=0}^{i} c_j\bigg).
\]
\end{lemma}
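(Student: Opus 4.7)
The plan is to mimic the passages from Lemma~\ref{lma:mgeval} to Lemma~\ref{lma:dmgeval} and from Lemma~\ref{lma:Feval} to Lemma~\ref{lma:aFeval}: derive the closed form by sending $\beta \to 1^-$ in the discounted expression supplied by Lemma~\ref{lma:mfeval}(b). The first equality $f_{(1,i)}^i = \lim_{\beta \to 1} f_{\beta,(1,i)}^i$ is the natural analogue, for the marginal cost metric, of the discounted-to-average relations invoked in Lemmas~\ref{lma:aGeval} and~\ref{lma:aFeval}, and follows from the same Laurent-series machinery in~\cite{put94} (Theorem~8.10.7) upon which the rest of this section rests. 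The substantive task is therefore to evaluate that limit in closed form.

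Writing the discounted auxiliary series as $C_{i+1,\beta}(q) \triangleq \sum_{j=1}^{\infty}(\beta q)^{j-1} c_{i+j}$ to distinguish it from its average counterpart $C_{i+1}(q)$ of~(\ref{eq:dDidef}), I would substitute Lemma~\ref{lma:mfeval}(b), observing that the resulting expression is of the indeterminate form $0/0$ at $\beta = 1$, since $p+q=1$ forces $\sigma_i \to 0$ while the bracketed numerator also vanishes. The natural remedy is to factor $(1-\beta)$ out of both. Using $1-\beta^m = (1-\beta)\sum_{\ell=0}^{m-1}\beta^\ell$, I would rewrite
\[
\sigma_i = q(1-\beta) + p(1-\beta^{i+1}) = (1-\beta)\Bigl(q + p\sum_{\ell=0}^{i}\beta^\ell\Bigr),
\]
and $1-\beta^i = (1-\beta)\sum_{\ell=0}^{i-1}\beta^\ell$, so that, after cancellation,
\[
f_{\beta,(1,i)}^i = \frac{\mu}{\,q + p\sum_{\ell=0}^{i}\beta^\ell\,}\Biggl[\beta\Bigl(\sum_{\ell=0}^{i-1}\beta^\ell\Bigr)(1-\beta q)\,C_{i+1,\beta}(q) \;-\; \sum_{j=0}^{i}\beta^j c_j\Biggr],
\]
with no remaining indeterminate factors.

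Letting $\beta \to 1^-$ term by term, the denominator tends to $q + p(i+1) = ip+1$, the prefactor $\beta\sum_{\ell=0}^{i-1}\beta^\ell$ tends to $i$, $1-\beta q$ tends to $p$, and the finite sum $\sum_{j=0}^{i}\beta^j c_j$ tends to $\sum_{j=0}^{i} c_j$; combined, these yield the claimed formula $\mu(ip+1)^{-1}\bigl(ip\,C_{i+1}(q) - \sum_{j=0}^{i} c_j\bigr)$. The only genuinely non-algebraic step, and thus the main obstacle, is justifying the convergence $C_{i+1,\beta}(q) \to C_{i+1}(q)$. For $q=0$ both sides reduce to $c_{i+1}$ trivially; for $q>0$ it follows by dominated convergence with summable majorant $q^{j-1} c_{i+j}$, whose summability is precisely Assumption~\ref{ass:ci}(ii) in disguise, since $\sum_{j\geqslant 1} q^{j-1} c_{i+j} = q^{-(i+1)} \sum_{m \geqslant i+1} c_m q^m < \infty$. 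This tail control is mild, and everything else in the argument is elementary manipulation of the $\beta$-polynomial factors.
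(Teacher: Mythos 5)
Your proposal is correct and takes the same route the paper intends: substitute the closed form from Lemma~\ref{lma:mfeval}(b) and send $\beta \to 1$. Your version is more explicit than the paper's one-line proof, in that you factor $(1-\beta)$ out of both $\sigma_i = q(1-\beta) + p(1-\beta^{i+1})$ and the bracketed numerator before passing to the limit, and you supply the dominated-convergence justification for $C_{i+1,\beta}(q)\to C_{i+1}(q)$ under Assumption~\ref{ass:ci}(ii), which the paper leaves implicit.
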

\begin{proof}
The result follows by using the expression for $f_{\beta, (1, i)}^i$ in Lemma~\ref{lma:mfeval}(b).
\end{proof}

\subsection{MP Index Analysis and Condition \highlighting{$($PCLI2$)$}} 
\label{s:ampmapcli2}
We are now  ready to address the PCL-indexability condition $($PCLI2$)$, namely, the 
nondecreasingness of the  MP \emph{index} 
defined by~(\ref{eq:mpi}), using the marginal metrics for the average cost criterion.
The next result gives closed formulae for such an average criterion MP index by drawing 
on the discounted MP index derived above, which we now denote as $m_{\beta, (1, i)}$.
 \begin{lemma}
\label{lma:ampicpcli2}
\begin{equation}
\label{eq:ampi}
m_{(1, i)} = 
\mu \bigg[i p   \, C_{i+1}(q) - \sum_{j=1}^{i} c_j\bigg]
= 
\begin{cases} \displaystyle
\mu \bigg[i p    \sum_{j=1}^{\infty} q^{j-1} c_{i+j} - \sum_{j=1}^{i} c_j\bigg], & \quad \textup{if} \quad q > 0 \\ \\ \displaystyle
i p   \, c_{i+1} - \sum_{j=1}^{i} c_j\, & \quad \textup{if} \quad q = 0.
\end{cases}
\end{equation}
\end{lemma}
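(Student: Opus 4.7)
The plan is to obtain the stated formula in essentially one line by combining the explicit expressions for the average marginal cost and work metrics already derived above. Concretely, since the MP index is defined in (\ref{eq:mpi}) as $m_{(1,i)} = f_{(1,i)}^i / g_{(1,i)}^i$, with $f$ and $g$ now understood as the average-criterion marginal metrics, I would plug in Lemma~\ref{lma:dmgeval} with $k=i$, which yields $g_{(1,i)}^i = 1/(ip+1)$, and Lemma~\ref{lma:amfeval}, which gives $f_{(1,i)}^i = \frac{\mu}{ip+1}\bigl(i p\, C_{i+1}(q) - \sum_{j=0}^{i} c_j\bigr)$. Dividing cancels the common factor $ip+1$ and, using the convention $c_0 = 0$ that is already implicit in passing from Lemma~\ref{lma:mfeval}(b) to Lemma~\ref{lma:dmpi} (where the sum is reindexed from $j=0$ to $j=1$), this produces exactly the first equality in (\ref{eq:ampi}); the split into the $q>0$ and $q=0$ branches just unpacks the two clauses of the definition (\ref{eq:dDidef}) of $C_{i+1}(q)$.

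An equivalent route, consistent with the framing in the sentence preceding the lemma, is to let $\beta \to 1$ in the discounted closed form $m_{\beta,(1,i)}$ of Lemma~\ref{lma:dmpi}. The relevant elementary limits are $\beta(1-\beta^i)/(1-\beta) \to i$, $(1-\beta q) \to p$, and $\sum_{j=1}^i \beta^j c_j \to \sum_{j=1}^i c_j$, so the only substantive point is that $C_{i+1,\beta}(q) = \sum_{j=1}^\infty (\beta q)^{j-1} c_{i+j}$ converges to $C_{i+1}(q) = \sum_{j=1}^\infty q^{j-1} c_{i+j}$. This interchange of limit and infinite series is the one nonroutine step; it is justified by monotone (or dominated) convergence, with Assumption~\ref{ass:ci}(ii) supplying the required summability envelope. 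Both routes land at the same formula, providing a useful consistency check.

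A brief remark on condition $(\mathrm{PCLI2})$: the stated lemma only contains the closed form, but the paragraph introducing it flags monotonicity of $m_{(1,i)}$ as the overall goal of the subsection. Should that be folded in, I would proceed exactly as in Lemma~\ref{lma:dmpicpcli2}: apply the recursion $C_i(q) = c_i + q\, C_{i+1}(q)$ and simplify $m_{(1,i)} - m_{(1,i-1)}$ to the form $\mu\bigl((i-1)p + 1\bigr)\bigl(p\, C_{i+1}(q) - c_i\bigr)$, which is nonnegative because the nondecreasingness of $c$ gives $C_{i+1}(q) \geqslant c_i \sum_{j=1}^\infty q^{j-1} = c_i/p$. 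I do not anticipate any genuine obstacle beyond the bookkeeping in these manipulations; the heavy lifting has already been done in establishing Lemmas~\ref{lma:dmgeval} and~\ref{lma:amfeval} (or equivalently Lemma~\ref{lma:dmpi}).
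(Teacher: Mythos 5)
Your proposal is correct, and your second route (letting $\beta \to 1$ in the closed form of Lemma~\ref{lma:dmpi}) is exactly the paper's proof; you also correctly flag the only nonroutine step, namely justifying the interchange of limit and infinite sum in $C_{i+1,\beta}(q) \to C_{i+1}(q)$, which the paper leaves implicit but which Assumption~\ref{ass:ci}(ii) indeed covers via dominated (or monotone) convergence, since $(\beta q)^{j-1}c_{i+j} \leqslant q^{j-1}c_{i+j}$ and the latter is summable. Your first route — forming $f_{(1,i)}^i / g_{(1,i)}^i$ directly from the already-limited average metrics of Lemmas~\ref{lma:dmgeval} and~\ref{lma:amfeval}, so that $m_{(1,i)} = \mu\bigl(ip\,C_{i+1}(q) - \sum_{j=0}^i c_j\bigr)$ — is a genuine and slightly cleaner variant: it pushes all limit-taking into the two preparatory lemmas, so that this lemma becomes a pure ratio with no further analytic content, at the cost of relying on the convention $c_0 = 0$ (harmless, as the fictitious state $(b,0)$ is never visited and the paper already uses this silently in passing from Lemma~\ref{lma:mfeval}(b) to Lemma~\ref{lma:dmpi}). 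Of course the two routes are just re-orderings of the same computation because $\lim_\beta (f_\beta/g_\beta) = (\lim_\beta f_\beta)/(\lim_\beta g_\beta)$ once the denominator limit is shown positive, which Lemma~\ref{lma:acpcli1} supplies. Your closing remark on $(\mathrm{PCLI2})$ addresses the \emph{next} lemma (Lemma~\ref{lma:apcli2}), not this one; the paper there argues that a pointwise limit of nondecreasing functions is nondecreasing, whereas your direct telescoping $m_{(1,i)} - m_{(1,i-1)} = \mu\bigl((i-1)p+1\bigr)\bigl(p\,C_{i+1}(q) - c_i\bigr) \geqslant 0$ is a nice, self-contained alternative that makes the nonnegativity manifest.
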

\begin{proof}
The result follows by taking the limit 
$m_{(1, i)} = \lim_{\beta \to 1} \, m_{\beta, (1, i)}$ using the formulae in Lemma~\ref{lma:dmpi} for $m_{\beta, (1, i)}$.
\end{proof}

We next establish satisfaction of condition $($PCLI2$)$.
\begin{lemma}
\label{lma:apcli2}
Under Assumption $\ref{ass:ci}$, the average criterion MP index $m_{(1, i)}$ is monotone nondecreasing, so $($PCLI2$)$ holds.
\end{lemma}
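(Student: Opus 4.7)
The plan is to show directly that $m_{(1,i)} - m_{(1,i-1)} \geqslant 0$ for $i \geqslant 2$, in strict parallel with the discounted proof of Lemma~\ref{lma:dmpicpcli2}, but using the average-criterion definition of $C_i(q)$ from~(\ref{eq:dDidef}). The two cases $q > 0$ and $q = 0$ can be treated together (the second is the continuous limit of the first) or, if preferred, separately; I focus on $q > 0$.

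The first step is to write, using the closed form~(\ref{eq:ampi}),
\[
m_{(1,i)} - m_{(1,i-1)} = \mu\bigl[i p\, C_{i+1}(q) - (i-1) p\, C_{i}(q) - c_i\bigr].
\]
The key tool is the recursive identity $C_{i}(q) = c_i + q\, C_{i+1}(q)$, which follows at once from~(\ref{eq:dDidef}) when $q > 0$ (and trivially when $q=0$, where one uses $c_i \leqslant c_{i+1}$). Substituting and collecting terms in $C_{i+1}(q)$ and $c_i$, one finds
\[
i p - (i-1) p\, q \;=\; p\bigl(1 + (i-1)p\bigr) \quad\text{and}\quad (i-1)p + 1 \;=\; 1 + (i-1)p,
\]
so the difference factors neatly as
\[
m_{(1,i)} - m_{(1,i-1)} = \mu\bigl(1 + (i-1)p\bigr)\bigl[p\, C_{i+1}(q) - c_i\bigr].
\]

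The remaining step, and the only place where Assumption~\ref{ass:ci} enters, is to bound the bracketed factor. Since $c_j$ is nondecreasing and $c_{i+j} \geqslant c_i$ for every $j \geqslant 1$,
\[
p\, C_{i+1}(q) = p \sum_{j=1}^{\infty} q^{j-1} c_{i+j} \;\geqslant\; p\, c_i \sum_{j=1}^{\infty} q^{j-1} = \frac{p}{1-q}\, c_i = c_i,
\]
using $1-q = p$. Combined with $\mu > 0$, $p > 0$ and $1+(i-1)p > 0$, this yields $m_{(1,i)} - m_{(1,i-1)} \geqslant 0$, completing the argument.

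I do not expect any genuine obstacle: the proof is essentially a two-line calculation once the recursion for $C_i(q)$ is invoked. The only mild care needed is to handle $q=0$, where $C_{i+1}(q) = c_{i+1}$ and the bound $p\, C_{i+1}(q) \geqslant c_i$ reduces to $c_{i+1} \geqslant c_i$ (with $p = 1$), which is immediate from Assumption~\ref{ass:ci}(i). Alternatively, one could derive the result by taking $\beta \to 1$ in the discounted inequality already established in Lemma~\ref{lma:dmpicpcli2}, but the direct approach above is shorter and self-contained.
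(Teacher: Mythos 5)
Your proof is correct, but it takes a genuinely different route from the paper's. The paper proves this lemma in one line: since $m_{(1,i)} = \lim_{\beta \to 1} m_{\beta,(1,i)}$ and each $m_{\beta,(1,i)}$ is nondecreasing in $i$ by Lemma~\ref{lma:dmpicpcli2}, the pointwise limit is nondecreasing as well. You do the direct computation instead, mirroring the structure of the discounted proof but with the average-criterion definition of $C_i(q)$, and you explicitly mention the limit argument as the alternative you chose not to take. Both are sound. The paper's limit argument is the shorter one and is consistent with how the rest of Section~\ref{s:pcliaac} is written (every lemma there is derived by passing $\beta \to 1$ in its discounted counterpart), so it keeps the presentation uniform. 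Your direct route buys something the paper's doesn't: a clean exact factorization $m_{(1,i)} - m_{(1,i-1)} = \mu\bigl(1+(i-1)p\bigr)\bigl[pC_{i+1}(q) - c_i\bigr]$ with each factor transparently nonnegative, which makes the source of monotonicity explicit and also shows that strict monotonicity holds precisely when $c_{i+j} > c_i$ for some $j \geqslant 1$ with $q^{j-1} > 0$. The algebra checks out: the recursion $C_i(q) = c_i + qC_{i+1}(q)$, the collection $ip - (i-1)pq = p(1+(i-1)p)$, and the bound $pC_{i+1}(q) \geqslant pc_i/(1-q) = c_i$ using $1-q = p$ are all correct, and your remark on the degenerate case $q = 0$ is also right.
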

\begin{proof}
The result follows from the nondecreasingness of the discounted MP index in Lemma~\ref{lma:dmpicpcli2}, since $m_{(1, i)} = \lim_{\beta \to 1} \, m_{\beta, (1, i)}$.
\end{proof}

We can now establish the model's indexability and evaluate its Whittle index.

\begin{proposition}
\label{pro:awi}
Under Assumption $\ref{ass:ci}$,
the model is threshold-indexable under the average criterion, with Whittle index given by the MP index $m_{(1, i)}.$
\end{proposition}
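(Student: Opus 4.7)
The plan is to apply the verification theorem (Theorem~\ref{the:verif}) exactly as was done in the proof of Proposition~\ref{pro:dwi} for the discounted case, but now instantiated with the average criterion versions of the marginal work metric, marginal cost metric, and MP index. Theorem~\ref{the:verif} is stated in a generic form that covers the average criterion setting (cf.\ the references to~\cite{nmaap01,nmmp02,nmmor06} mentioned at the start of Section~\ref{s:pcliaac}), so the only thing that needs checking is that the two hypotheses $(\textup{PCLI1})$ and $(\textup{PCLI2})$ hold for the average-cost marginal metrics.

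Both hypotheses have already been verified in the preceding subsections. First, $(\textup{PCLI1})$ — positivity of the average marginal work metric under every threshold policy — is supplied by Lemma~\ref{lma:acpcli1}, which was obtained via the closed-form expression for $g_{(1,i)}^k$ in Lemma~\ref{lma:dmgeval}. Second, $(\textup{PCLI2})$ — monotone nondecreasingness of the average MP index $m_{(1,i)}$ — is supplied by Lemma~\ref{lma:apcli2}, whose proof leverages the discounted counterpart Lemma~\ref{lma:dmpicpcli2} by passing to the limit $\beta \to 1$. Since Assumption~\ref{ass:ci} is in force throughout, both lemmas are applicable.

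Thus the proof reduces to a one-line invocation: combine Lemmas~\ref{lma:acpcli1} and~\ref{lma:apcli2} with Theorem~\ref{the:verif} to conclude that the model is threshold-indexable under the average criterion and that the Whittle index coincides with the MP index $m_{(1,i)}$ given by~(\ref{eq:ampi}). There is no real obstacle here; all of the technical work — the Laurent-series-style limit passages from discounted to average metrics, the evaluation of $g_{(1,i)}^k$ and $f_{(1,i)}^i$ in closed form, and the monotonicity argument for $m_{(1,i)}$ — has been absorbed into the preparatory lemmas. The only point worth double-checking is that Theorem~\ref{the:verif}, although stated in the discounted setting at the beginning of Section~\ref{s:dpidwi}, does in fact extend to the average criterion under the current countable-state setup; this is precisely what is guaranteed by the results cited from~\cite{nmaap01,nmmp02,nmmor06} at the opening of Section~\ref{s:pcliaac}.
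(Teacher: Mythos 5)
Your proposal is correct and takes essentially the same approach as the paper: combine Lemma~\ref{lma:acpcli1} (condition PCLI1) and Lemma~\ref{lma:apcli2} (condition PCLI2) with the average-criterion version of Theorem~\ref{the:verif}. In fact you cite the lemmas more precisely than the paper does—the paper's proof references Lemma~\ref{lma:ampicpcli2} (the index formula) where it appears to mean Lemma~\ref{lma:apcli2} (the monotonicity lemma establishing PCLI2).
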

\begin{proof}
The result follows from Lemmas~\ref{lma:acpcli1} and~\ref{lma:ampicpcli2} using 
Theorem~\ref{the:verif} (in its average criterion version).
\end{proof}

\section{Index Dependence on Model Parameters and Evaluation for Special Cost Functions}
 \label{s:dwimp}
 This section exploits the Whittle index formulae obtained above for two interrelated 
 purposes: exploring the dependence of the index on model parameters, drawing 
 corresponding insights, and specializing the index formula to relevant special cost 
 functions.  

\subsection{Monotonicity of the Whittle Index on the Arrival Rate}
\label{s:dmiar}
We have the following result.

\begin{proposition}
\label{pro:dmiar}
Both
the discounted Whittle index $m_{\beta, (1, i)}$ and the average Whittle index $m_{(1, i)}$ are nonincreasing in the packet arrival probability $\lambda$.
\end{proposition}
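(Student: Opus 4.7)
The plan is to observe that the packet arrival probability $\lambda$ enters both index formulae only through the product $p = \lambda \mu$ (equivalently, through $q = 1 - \lambda \mu$). Since $\mu \in (0,1]$ is fixed, it therefore suffices to show that each index is nondecreasing as a function of $q \in [0,1]$. I will treat the discounted and average cases in parallel because, after isolating the $\lambda$-dependent terms, both reduce to the same monotonicity statement about an Abel-type series.

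More concretely, in Lemma~\ref{lma:dmpi} the term $\sum_{j=1}^{i}\beta^{j} c_j$ does not involve $\lambda$, and the $\lambda$-dependent part of $m_{\beta,(1,i)}$ is a positive constant multiple of $(1-\beta q)\,C_{i+1}(q)$. Similarly, the $\lambda$-dependent part of the average-cost index $m_{(1,i)}$ in Lemma~\ref{lma:ampicpcli2} is a positive multiple of $p\,C_{i+1}(q) = (1-q)\,C_{i+1}(q)$. Setting $r \in \{\beta q, q\}$, both reduce to the same object
\[
H(r) \triangleq (1-r)\sum_{j=1}^{\infty} r^{j-1} c_{i+j},
\]
and the claim will follow once I show $H$ is nondecreasing in $r \in [0,1)$, since $r$ is nondecreasing in $q$ and hence nonincreasing in $\lambda$.

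The key step is a summation-by-parts (Abel) rearrangement:
\[
H(r) = \sum_{j=1}^{\infty} r^{j-1} c_{i+j} - \sum_{j=1}^{\infty} r^{j} c_{i+j}
= c_{i+1} + \sum_{j=2}^{\infty} r^{j-1}\bigl(c_{i+j} - c_{i+j-1}\bigr).
\]
By Assumption~\ref{ass:ci}(i), every increment $c_{i+j} - c_{i+j-1}$ is nonnegative, and each weight $r^{j-1}$ is nondecreasing in $r$; Assumption~\ref{ass:ci}(ii) guarantees absolute convergence so that the termwise manipulation is legitimate. Hence $H$ is nondecreasing in $r$, which translates into the required monotonicity in $\lambda$ for both the discounted and average Whittle indices. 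The boundary case $q = 0$ (and $r = 0$) is covered by continuity of $H$ at the origin, consistent with the second branches of~(\ref{eq:dmpi}) and~(\ref{eq:ampi}).

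I do not anticipate a substantive obstacle: the monotonicity is essentially an Abel-summation identity combined with the nondecreasingness of $c$. If one wants a more intuitive reading, the same fact admits a probabilistic interpretation in the average case, namely $p\,C_{i+1}(q) = \mathbb{E}[c_{i+J}]$ with $J$ geometric of parameter $p$; increasing $\lambda$ increases $p$, stochastically decreases $J$, and thus decreases $\mathbb{E}[c_{i+J}]$ since $c$ is nondecreasing. The discounted case admits an analogous reading with a $\beta$-tilted geometric distribution.
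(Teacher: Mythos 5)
Your proof is correct, but it takes a genuinely different route from the paper's. The paper differentiates the index formulae with respect to $\lambda$ directly and shows the derivative is nonpositive; you instead factor out the $\lambda$-dependence as $H(r) = (1-r)\sum_{j\ge1} r^{j-1}c_{i+j}$ with $r=\beta q$ (discounted) or $r=q$ (average), and perform an Abel rearrangement
\[
H(r)=c_{i+1}+\sum_{j\ge 2} r^{j-1}\bigl(c_{i+j}-c_{i+j-1}\bigr),
\]
which shows $H$ is nondecreasing in $r$ term by term, using only that the $c_j$ are nondecreasing. Since $r$ is nonincreasing in $\lambda$, the conclusion follows without ever differentiating. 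Your argument has two concrete advantages. First, it handles both criteria \emph{simultaneously} through the single function $H$, whereas the paper treats them separately. Second---and more substantively---the paper's calculus computation as written differentiates only the factor $C_{i+1}(q)$ inside $(1-\beta q)C_{i+1}(q)$ (resp.\ $p\,C_{i+1}(q)$) and appears to omit the product-rule contribution from the $(1-\beta q)$ (resp.\ $p$) factor, which also depends on $\lambda$; one can check this with the linear-cost case, where the paper's displayed derivative does not reconcile with $m_{(1,i)}=\tfrac{c\mu}{2}\bigl(i^2+\tfrac{1+q}{p}i\bigr)$. The sign conclusion is nonetheless correct, and your Abel-summation argument supplies a clean, derivative-free proof of it that also sidesteps any convergence subtleties (since the rearranged series has nonnegative terms dominated by the given one). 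Your probabilistic reading ($p\,C_{i+1}(q)=\mathbb{E}[c_{i+J}]$ with $J$ geometric, stochastically decreasing in $p$) is an accurate and illuminating gloss on the same fact.
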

\begin{proof}
For the discounted criterion,
we use the expression of the index in~(\ref{eq:dmpi}), along with 
\begin{align*}
\frac{d}{d \lambda} C_{\beta, i+1}(q) & = \frac{d}{d \lambda} \sum_{j=1}^{\infty} (\beta q(\lambda))^{j-1} c_{i+j} = 
\frac{d}{d \lambda} \sum_{j=2}^{\infty} (\beta q(\lambda))^{j-1} c_{i+j} \\
& = \beta \bigg(\sum_{j=2}^{\infty} (j-1) (\beta q(\lambda))^{j-2}   c_{i+j}\bigg) \frac{d}{d \lambda} q(\lambda) \\
& = - \beta \mu \sum_{j=1}^{\infty} j (\beta q)^{j-1}   c_{i+j+1},
\end{align*}
to obtain 
\begin{align*}
\frac{d}{d \lambda} m_{\beta, (1, i)} = -
\frac{(\beta \mu)^2 (1 - \beta^{i})(1-\beta q)}{1 - \beta}     \sum_{j=1}^{\infty} j (\beta q)^{j-1}   c_{i+j+1} \leqslant 0.
\end{align*}

For the average criterion, we use the expression of the index in~(\ref{eq:ampi}), along with 
\[
\frac{d}{d \lambda} C_{i+1}(q) = -\mu \sum_{j=1}^{\infty} j q^{j-1}   c_{i+j+1},
\]
to obtain 
\[
\frac{d}{d \lambda} m_{(1, i)} = 
-i p \mu ^2    \sum_{j=1}^{\infty} j q^{j-1}   c_{i+j+1} 
\leqslant 0.
\]
\end{proof}

Proposition~\ref{pro:dmiar} provides the following insight regarding the packet 
transmission priority under Whittle's index policy: other things being equal, the 
policy gives higher (or, more precisely, not lower) transmission priority to users whose 
packets arrive at lower rates. 
This is consistent with the numerical results in 
\cite[Section \hl{7.1} ]{hsuetal20} on the 
properties of optimal scheduling policies for the special case with reliable channels under 
the average criterion.

It is readily verified with examples that, for general cost functions, the Whittle index does  
not need to be monotonic on the  transmission success probability $\mu$.
Yet, some monotonicity properties can be identified in the case of special cost functions, as discussed below.

\subsection{Whittle Index Formulae for Special Cost Functions}
\label{s:dscf}
We next consider evaluation of the Whittle index for special cost functions, which yields formulae that can be implemented with greater efficiency than the general formulae above.
We shall consider the cases of linear, quadratic, and threshold-type AoI costs.

\subsubsection{Linear AoI Cost}
\label{s:dlaoic}
We start with the case of a linear cost $c_i \triangleq c  i$, with $c > 0$. The discounted index reduces to
\begin{equation}
\label{eq:dwilin}
m_{\beta, (1, i)} = 
\frac{\beta c \mu}{1 - \beta}\bigg[i-\frac{\beta 
   \left(1-\beta ^i\right)
   p}{(1-\beta ) (1-\beta 
   q)}\bigg],
\end{equation}
and the average criterion index simplifies to
\begin{equation}
\label{eq:awilin}
m_{(1, i)} = \frac{c \mu}{2} \left(i^2+\frac{1+q}{p} i\right) = 
   c  \mu i  \bigg(\frac{i - 1}{2} + \frac{1}{p}\bigg).
\end{equation}

Even though, for general costs, the Whittle index is not necessarily monotonic in the 
probability of successful transmission $\mu$, in this special case it turns out to be 
increasing in $\mu$ (for $i \geqslant 2$ under the average criterion, since $m_{(1, 1)} = c 
/\lambda$).

\begin{proposition}
\label{pro:wiiinmu} For linear costs, 
the discounted Whittle index $m_{\beta, (1, i)}$ is increasing in $\mu$. The same holds for the average Whittle index $m_{(1, i)}$ for $i \geqslant 2$.
\end{proposition}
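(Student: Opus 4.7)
My plan is to differentiate the two closed-form Whittle index formulae (\ref{eq:dwilin}) and (\ref{eq:awilin}) directly with respect to $\mu$, keeping $\lambda$ fixed so that $p = \lambda\mu$ and $q = 1 - \lambda\mu$ depend affinely on $\mu$.

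For the average criterion, rewriting (\ref{eq:awilin}) by observing that $\mu/p = 1/\lambda$ gives the explicit identity
\begin{equation*}
m_{(1,i)} = \frac{c \mu\, i (i-1)}{2} + \frac{c i}{\lambda},
\end{equation*}
so that $d m_{(1,i)}/d\mu = c i (i-1)/2$, which is strictly positive for $i \geqslant 2$. The case $i = 1$ collapses to $c/\lambda$, independent of $\mu$, which explains the ``$i \geqslant 2$'' qualifier.

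For the discounted criterion, I would split (\ref{eq:dwilin}) into an affine part and a single rational-in-$\mu$ part:
\begin{equation*}
m_{\beta,(1,i)} = \frac{\beta c i}{1-\beta}\,\mu \;-\; \frac{\beta^2 c (1-\beta^i)\lambda}{(1-\beta)^2}\, u(\mu), \qquad u(\mu) \triangleq \frac{\mu^2}{(1-\beta) + \beta\lambda\mu}.
\end{equation*}
The intuition is that $u(\mu)$ is ``almost linear'' with slope sharply below $1/(\beta\lambda)$. Concretely, writing $B \triangleq 1-\beta$ and $C \triangleq \beta\lambda$, the quotient rule yields $u'(\mu) = \mu(2B + C\mu)/(B+C\mu)^2$, and the algebraic identity $(B+C\mu)^2 - C\mu(2B+C\mu) = B^2 > 0$ gives the strict bound $u'(\mu) < 1/C = 1/(\beta\lambda)$ for every $\mu \geqslant 0$. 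Substituting this into $dm_{\beta,(1,i)}/d\mu$ reduces strict monotonicity to the elementary inequality $i(1-\beta) \geqslant 1-\beta^i$, which follows from $1-\beta^i = (1-\beta)\sum_{k=0}^{i-1}\beta^k$ together with $\sum_{k=0}^{i-1}\beta^k \leqslant i$ for $0 < \beta < 1$.

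The main obstacle is the discounted case; the decisive step is the sharp upper bound $u'(\mu) < 1/(\beta\lambda)$, resting on the algebraic identity $(B+C\mu)^2 - C\mu(2B+C\mu) = B^2$. Once this is in hand, the remaining inequality $i(1-\beta) \geqslant 1-\beta^i$ is classical, and strict positivity of $d m_{\beta,(1,i)}/d\mu$ follows in one line—note that strictness is supplied by the $u'$ bound, so the argument remains valid even in the degenerate subcase $i=1$ where $i(1-\beta) = 1-\beta^i$.
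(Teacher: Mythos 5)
Your proof is correct. For the average criterion your calculation coincides with the paper's; for the discounted criterion you take a genuinely different and, in fact, cleaner route. The paper differentiates $m_{\beta,(1,i)}$ directly in $\mu$, obtains
$\frac{d}{d\mu}m_{\beta,(1,i)} = \frac{\beta c}{1-\beta}\bigl[i - \tfrac{\beta(1-\beta^i)(2-\beta(2-p))p}{(1-\beta)(1-\beta q)^2}\bigr]$,
and then bounds the bracketed fraction by $\tfrac{1}{1-\beta}$ by showing it is increasing in $p$ (claiming, a bit loosely, that the bound is ``attained'' at $p=1$, whereas in fact it is approached but never reached since the value at $p=1$ is $\beta(2-\beta)/(1-\beta) < 1/(1-\beta)$). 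Your decomposition $m_{\beta,(1,i)} = \tfrac{\beta c i}{1-\beta}\mu - \tfrac{\beta^2 c(1-\beta^i)\lambda}{(1-\beta)^2}u(\mu)$ with $u(\mu)=\mu^2/(B+C\mu)$, together with the exact identity $(B+C\mu)^2 - C\mu(2B+C\mu)=B^2$, shows $C\,u'(\mu) = 1 - B^2/(B+C\mu)^2 < 1$ with the strictness fully transparent, and then the classical bound $i(1-\beta)\geqslant 1-\beta^i$ closes the argument. This decomposition buys two things: (i) the source of the strict inequality (needed for the $i=1$ case where $i(1-\beta)=1-\beta^i$) is isolated in the elementary $u'$ bound rather than buried in a chain, and (ii) the rational structure in $\mu$ is exposed once, so the $p$-monotonicity argument the paper needs becomes unnecessary. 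Both proofs are valid; yours is more self-contained.
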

\begin{proof}
Under the discounted cost criterion, we have
\[
\frac{d}{d \mu} m_{\beta, (1, i)} = 
\frac{\beta c}{1 - \beta}\bigg[i-\frac{\beta  \left(1-\beta ^i\right) (2-\beta 
   (2-p)) p}{(1-\beta ) (1-\beta  q)^2}\bigg] > 0, 
\]
which follows because
\begin{equation}
\label{eq:chainineqmu}
\frac{\beta  (2-\beta 
   (2-p)) p}{(1-\beta ) (1-\beta  q)^2} \leqslant \frac{1}{1-\beta} < \frac{i}{1-\beta ^i}.
\end{equation}

Note that~(\ref{eq:chainineqmu}) holds because $i/(1-\beta ^i)$ increases with $i$, and the left-most expression in~(\ref{eq:chainineqmu}) attains the value $1/(1-\beta)$ for $p = 1$ and increases with $p$ (recall that $q \triangleq 1 - p$), since
\[
\frac{d}{d p} \frac{\beta  (2-\beta 
   (2-p)) p}{(1-\beta ) (1-\beta  q)^2}  = \frac{2 \beta (1-\beta)}{(1 - \beta q)^3} > 0.
\]

Under the average cost criterion, we have, for $i \geqslant 2$, 
\[
\frac{d}{d \mu} m_{(1, i)} = \frac{c i (i-1)}{2} > 0.
\]

This proves the result.
\end{proof}

Hence, for linear AoI costs, other things being equal, Whittle's policy gives higher 
transmission priority to users for which packet transmissions are more likely to succeed.

We next graphically illustrate the above results for the base instance with linear cost $c_i 
\triangleq i$ and the parameters  $\lambda = 0.7$, $\mu = 0.8$, and $\beta = 0.8$ for the 
states $(1, 1)$, $(1, 2)$, and $(1, 3)$.
Figure~\ref{fig:mpiaoiL} plots the corresponding Whittle index vs.\ the AoI under the 
discounted and average cost criteria. The plots show that the discounted index grows nearly linearly in the AoI, in contrast to the quadratic growth of the average criterion index. 

Regarding the dependence on $\lambda$, Figure~\ref{fig:mpilambdaL} plots the 
discounted and the average criterion Whittle indices vs.\ $\lambda$ for AoI values $i = 1, 
2, 3$. The plot is consistent with Proposition~\ref{pro:dmiar}, as the Whittle index is 
decreasing in $\lambda$.
A significant difference is that, as $\lambda \to 0$, 
the discounted index converges to a finite limit, easily seen to be 
$\lim_{\lambda \to 0} m_{\beta, (1, i)}  = \beta c \mu i /(1-\beta)$,
whereas the average 
criterion index diverges to infinity. Thus, under the average cost criterion, the increase in 
priority becomes steeper as the packet arrival probability becomes lower. 

Consider now the dependence on $\mu$. Figure~\ref{fig:mpimuL} plots the discounted 
and the average criterion Whittle indices vs.\ $\mu$ for AoI values $i = 1, 2, 3$. The plot is 
consistent with Proposition~\ref{pro:wiiinmu}, as the index increases with $\mu$ 
(except for $i = 1$ under the average criterion, where it remains constant as $\mu$ varies).
Note that the increase is linear under the average criterion.

\begin{figure}[H]
\centering
\includegraphics[height=2.3in]{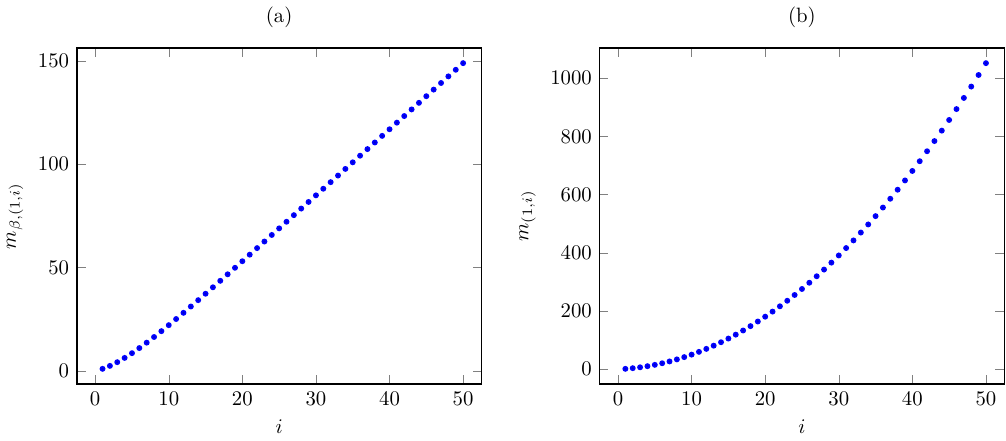}
\caption{
Discounted (\textbf{a}) and average (\textbf{b}) criterion Whittle index vs.\ AoI $i$ for linear AoI 
 cost.}
\label{fig:mpiaoiL}
\end{figure}

\vspace{-12pt}

\begin{figure}[H]
\centering
\includegraphics[height=2.3in]{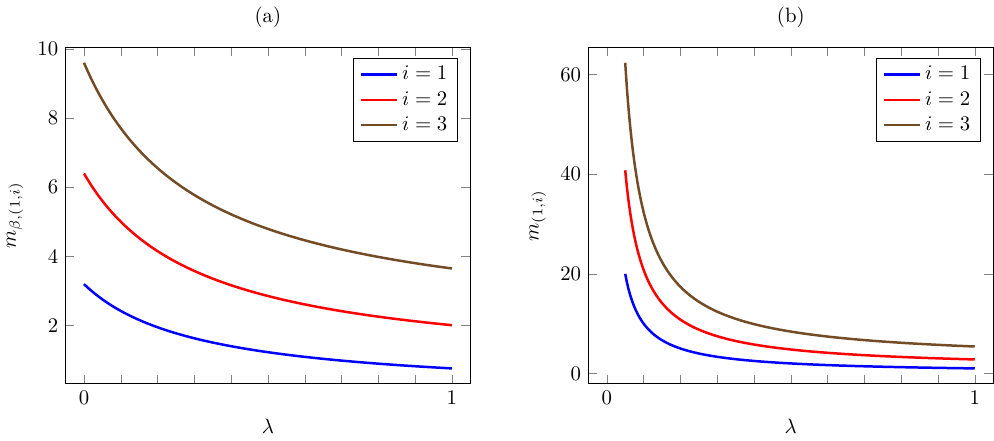}
\caption{\hl{Discounted} 
 (\textbf{a}) and average (\textbf{b}) criterion Whittle index vs.\ $\lambda$ for linear AoI 
 cost.}
\label{fig:mpilambdaL}
\end{figure}

\vspace{-12pt}
\begin{figure}[H]
\centering
\includegraphics[height=2.3in]{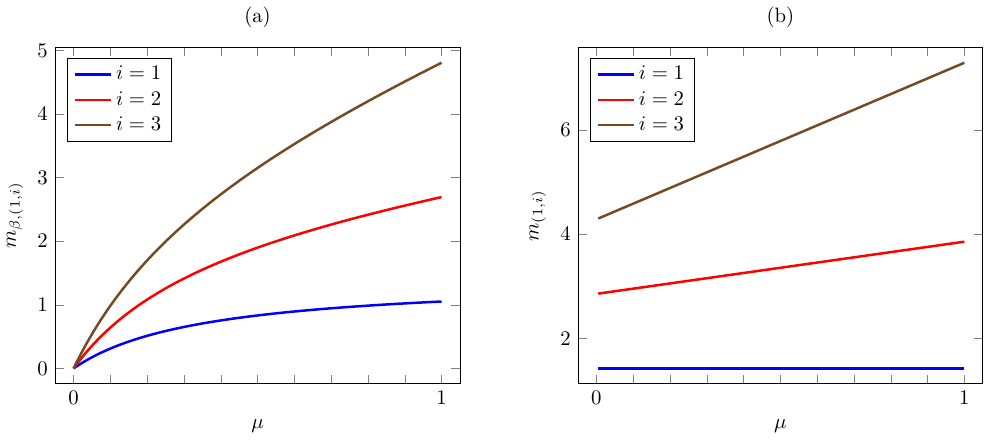}
\caption{\hl{Discounted} 
 (\textbf{a}) and average (\textbf{b}) criterion Whittle indices vs.\ $\mu$ for linear AoI 
 cost.}
\label{fig:mpimuL}
\end{figure}

\subsubsection{{Quadratic} AoI Cost}
\label{s:dqaoic}
We next turn to the case of a 
quadratic cost function $c_i \triangleq c i^2$. Then, the discounted Whittle index has 
the evaluation 
\begin{equation}
\label{eq:dwiq}
m_{\beta, (1, i)} = 
\frac{\beta c \mu}{1 - \beta}\bigg[i^2+\frac{2 \left(1-\beta +p \beta ^{i+1}\right)}{(1-\beta )
   (1-\beta  q)} i - \frac{\beta p  \left(1-\beta ^i\right) (3-\beta  (1+(1+\beta ) q))}{(1-\beta )^2
   (1-\beta  q)^2}\bigg],
\end{equation}
whereas the average criterion Whittle index reduces to
\begin{equation}
\label{eq:awiq}
m_{(1, i)} = 
c \mu \bigg[\frac{2}{3} i^3 +\frac{4-(1+q)^2}{2
   p^2}  i^2 +\frac{21-(3+p)^2}{6
   p^2}  i\bigg].
\end{equation}

Considering the same instance as above, Figure~\ref{fig:mpiaoiQ} plots the corresponding Whittle index vs.\ the AoI under the 
discounted and average cost criteria. The plots show that the Whittle index growth with the AoI is much steeper under the average criterion (having cubic growth) than under the discounted criterion (having  quadratic growth). 

\begin{figure}[H]
\centering
\includegraphics[height=2.3in]{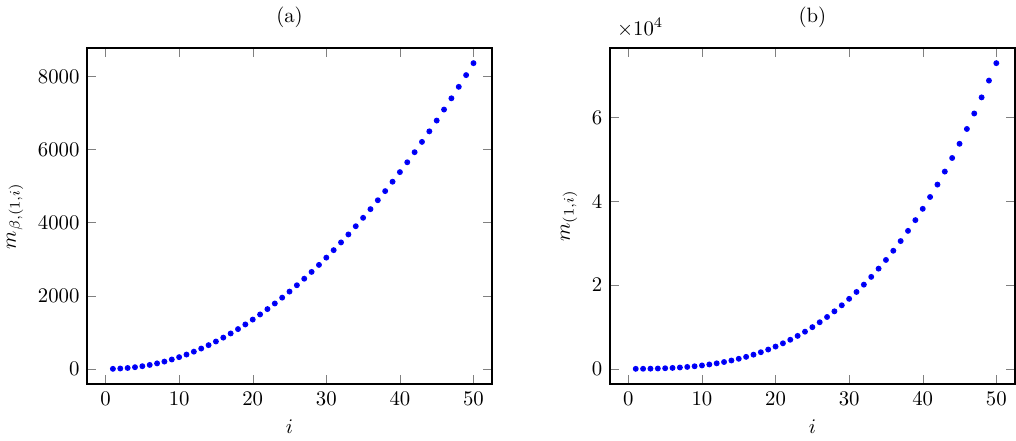}
\caption{
Discounted (\textbf{a}) and average (\textbf{b}) criterion Whittle index vs.\ AoI $i$ for quadratic AoI 
 cost.}
\label{fig:mpiaoiQ}
\end{figure}

As for the dependence on $\lambda$, the plots in Figure~\ref{fig:mpilambdaQ} are also consistent with Proposition~\ref{pro:dmiar}.
As was the case for the linear cost case, we see that, as $\lambda \to 0$, 
the discounted index converges to a finite limit, whereas the average criterion index 
diverges to infinity. 

\begin{figure}[H]
\centering
\includegraphics[height=2.3in]{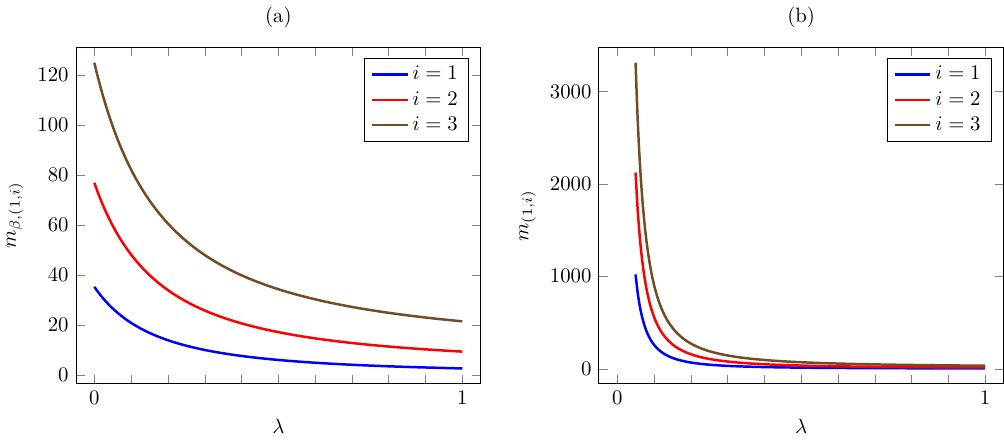}
\caption{\hl{Discounted} 
 (\textbf{a}) and average (\textbf{b}) criterion Whittle indices vs.\ $\lambda$ for 
 quadratic AoI cost.}
\label{fig:mpilambdaQ}
\end{figure}

Consider now the dependence on $\mu$. Figure~\ref{fig:mpimuQ} plots the discounted 
and the average criterion Whittle index vs.\ $\mu$ for the same instance.
The plot reveals a strikingly different behavior of the discounted and average criterion 
Whittle indices.
Thus, for $i = 1$, the discounted index appears to first increase and then to decrease in $\mu$. 
To verify it, note that
\[
m_{\beta, (1, 1)} = \frac{\beta  (3-\beta  q) c \mu }{(1-\beta 
   q)^2}, 
\]
and 
\[
\frac{d}{d \mu} m_{\beta, (1, 1)} = \frac{\beta  c (3-\beta  (4-\beta +(1+\beta )
   p))}{(1-\beta q)^3}.
\]
Thus, for the given instance, we have
\[
\frac{d}{d \mu} m_{\beta, (1, 1)} = \frac{100 (55-126 \mu )}{(5+14 \mu )^3},
\]
and hence $m_{\beta, (1, 1)}$ increases in $\mu$ for $0 < \mu < 55/126$ and decreases in $\mu$ for $55/126 < \mu < 1$.

\begin{figure}[H]
\centering
\includegraphics[height=2.3in]{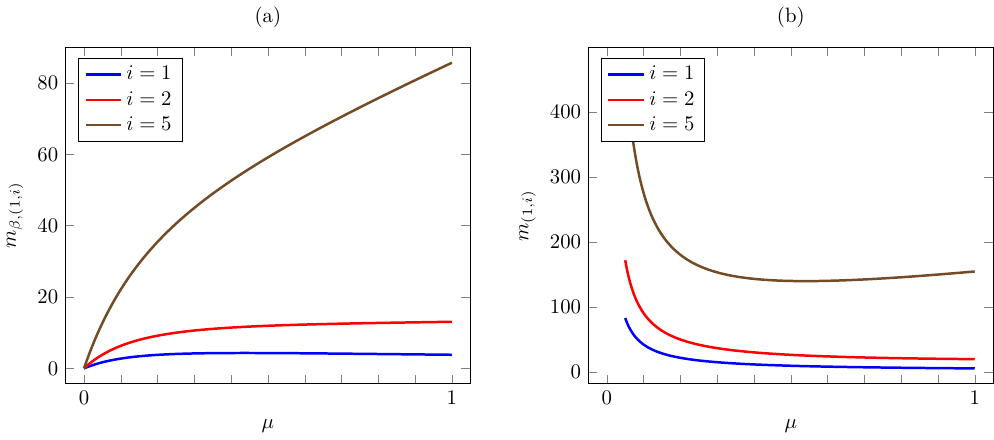}
\caption{\hl{Discounted} 
 (\textbf{a}) and average (\textbf{b}) criterion Whittle indices vs.\ $\mu$ for quadratic 
 AoI cost.}
\label{fig:mpimuQ}
\end{figure}

On the other hand, the plot shows that the discounted Whittle index increases with $\mu$ for $i = 2$ and
for $i = 5$.  It is readily verified analytically that such is the case in this instance for AoI 
values $i \geqslant 2$.

In contrast, the plot shows that the average criterion Whittle index $m_{(1, 1)}$ decreases with $\mu$. 
We can readily verify this, noting that 
\begin{equation}
\label{eq:ddmum1iq}
\frac{d}{d \mu} m_{(1, i)} = 
c \bigg[\frac{2
   i^3}{3}-\frac{i^2}{2}-\left(\frac{1}{6}+\frac{2}{p^
   2}\right) i\bigg] = c i \bigg[\frac{2
   }{3} i^2-\frac{1}{2} i-\left(\frac{1}{6}+\frac{2}{p^
   2}\right)\bigg],
\end{equation}
and hence 
\[
\frac{d}{d \mu} m_{(1, 1)} = -\frac{2c}{p^2} < 0.
\]

Furthermore, we have, for $0 < p < 1$,
\[
\frac{d}{d \mu} m_{(1, 2)} = c \bigg(3 - \frac{4}{p^2}\bigg) < 0,
\]
and hence $m_{(1, 2)}$ also decreases with $\mu$, consistently with the plot.

As for $m_{(1, 5)}$, the plot shows that it first decreases and then increases with $\mu$. 
Using~(\ref{eq:ddmum1iq}), it is readily verified analytically that such behavior holds 
provided $\lambda > 1/\sqrt{7}$, since
 \[
\frac{d}{d \mu} m_{(1, 5)} = c \bigg(70 - \frac{10}{\lambda^2 \mu^2}\bigg),
\]
and hence $m_{(1, 5)}$ has a unique minimum at $\mu^* = 1/(\lambda \sqrt{7}) \in (0, 1)$ for such $\lambda$.

The same qualitative behavior holds for $m_{(1, i)}$ where $i \geqslant 3$, first decreasing 
and then increasing with $\mu$, provided that
\[
\lambda > \frac{2 \sqrt{3}}{\sqrt{4 i^2-3 i-1}}
\]
since in such cases $m_{(1, i)}$ has a unique minimum as a function of $\mu$ at \vspace{-6pt}
\[
\mu^* = \frac{2 \sqrt{3}}{\lambda \sqrt{4 i^2-3 i-1}} \in (0, 1).
\]

Note also that as $\mu \to 0$,  $m_{\beta, (1, i)} \to 0$, whereas $m_{(1, i)} \to \infty$.

\subsubsection{Threshold-Type AoI Cost}
\label{s:dtaoic}
Finally, consider the case of a threshold-type cost function, with 
$c_i  \triangleq c \, 1_{\{i > k\}}$. Then, it is readily verified that the discounted Whittle index is given by
\begin{equation}
\label{eq:dwithr}
m_{\beta, (1, i)} = 
\begin{cases} \displaystyle
\frac{\beta c \mu}{1 - \beta}  (1 - \beta^{i})  (\beta q)^{k-i}, & \quad \textup{if} \quad i < k \\ \\ \displaystyle
\frac{\beta c \mu}{1 - \beta} (1 - \beta^{k}), & \quad \textup{if} \quad i \geqslant k,
\end{cases}
\end{equation}
whereas the average criterion index has the evaluation
\begin{equation}
\label{eq:awithr}
m_{(1, i)} = 
\begin{cases} \displaystyle
c \mu  i  q^{k-i}, & \quad \textup{if} \quad i < k \\ \\ \displaystyle
c \mu  k, & \quad \textup{if} \quad i \geqslant k.
\end{cases}
\end{equation}

Considering the same instance as above, with $k = 10$, Figure~\ref{fig:mpiaoiT} plots the Whittle index vs.\ the AoI under the 
discounted and average cost criteria. The plots show that the Whittle index growth with the AoI is slightly steeper for $i  < k$ under the average criterion than under the discounted criterion. 
For $i \geqslant k$, both indices remain constant as the AoI $i$ grows.

\begin{figure}[H]
\centering
\includegraphics[height=2.3in]{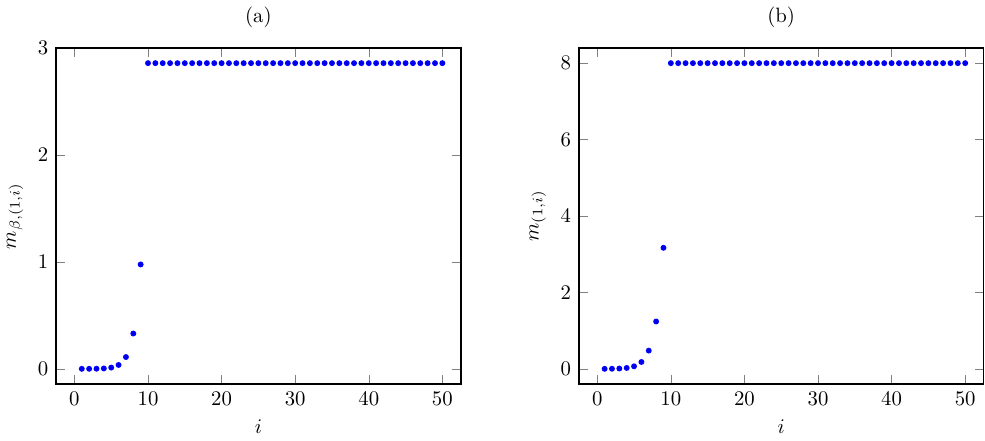}
\caption{\hl{Discounted} 
(\textbf{a}) and average (\textbf{b}) criterion Whittle index vs.\ AoI $i$ for threshold-type AoI cost.}
\label{fig:mpiaoiT}
\end{figure}

Regarding the dependence on $\lambda$, 
we have
\[
\frac{d}{d \lambda} m_{\beta, (1, i)} = 
\begin{cases} \displaystyle
- \frac{\beta^2 c \mu^2}{1 - \beta}  (1 - \beta^{i})   (k-i)  (\beta q)^{k-i-1}, & \quad \textup{if} \quad i < k \\ \\ \displaystyle
0, & \quad \textup{if} \quad i \geqslant k,
\end{cases}
\]
and
\[
\frac{d}{d \lambda} m_{(1, i)} = 
\begin{cases} \displaystyle
-c \mu^2 i (k-i) q^{k-i-1}, & \quad \textup{if} \quad i < k \\ \\ \displaystyle
0, & \quad \textup{if} \quad i \geqslant k.
\end{cases}
\]

Figure~\ref{fig:mpilambdaT} plots the discounted and the average criterion Whittle 
indices vs.\ $\lambda$. The plot shows that both indices are decreasing in $\lambda$ for 
$i < k$ and remain constant for $i \geqslant k$. Note that the indices decrease linearly 
with $\lambda$ for $i = k-1$.

\begin{figure}[H]
\centering
\includegraphics[height=2.3in]{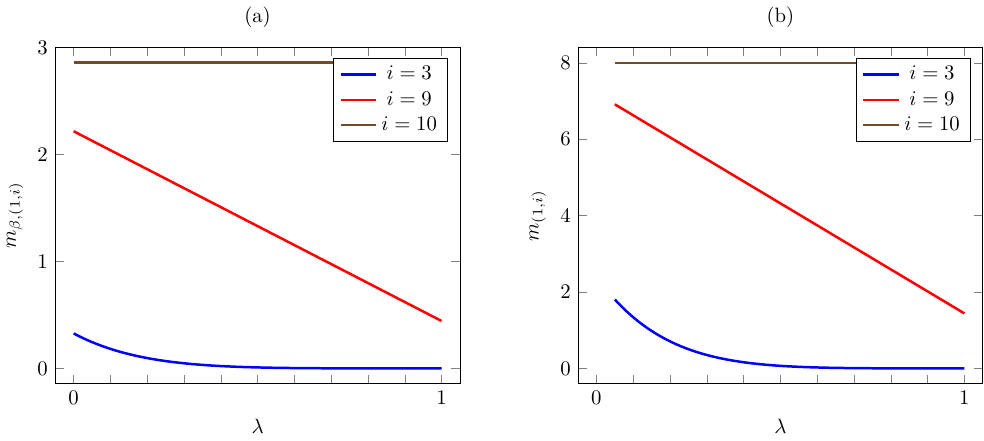}
\caption{\hl{Discounted} 
 (\textbf{a}) and average (\textbf{b}) criterion Whittle indices vs.\ $\lambda$ for 
 threshold-type AoI cost.}
\label{fig:mpilambdaT}
\end{figure}

As for the dependence of the Whittle index on $\mu$, we have, for the discounted index,
\[
\frac{d}{d \mu} m_{\beta, (1, i)} = 
\begin{cases} \displaystyle -
c \frac{\beta^2 (1 - \beta^{i})  (\beta q)^{k-i-1}}{1 - \beta}   ((k-i+1) p-1), & \quad \textup{if} \quad i < k \\ \\ \displaystyle
 c \frac{\beta (1 - \beta^{k})}{1 - \beta}, & \quad \textup{if} \quad i \geqslant k,
\end{cases}
\]
and, for the average criterion index, 
\[
\frac{d}{d \mu} m_{(1, i)} = 
\begin{cases} \displaystyle 
-c  i  q^{k-i-1} ((k-i+1) p-1), & \quad \textup{if} \quad i < k \\ \\ \displaystyle
 c  k, & \quad \textup{if} \quad i \geqslant k.
\end{cases}
\]

Hence, for $i < k$, both indices $m_{\beta, (1, i)}$ and $m_{(1, i)}$ decrease with $\mu$ whenever $(k-i+1) p >1$, i.e., for 
$\mu > 1 / (\lambda (k-i+1))$, and increase with $\mu$ for sufficiently low $\mu$ values, 
in particular for $\mu < 1 / (\lambda (k-i+1))$.
In contrast, for $i \geqslant k$, both indices  increase linearly in $\mu$.
Such behavior is illustrated in Figure~\ref{fig:mpimuT}.

\begin{figure}[H]
\centering
\includegraphics[height=2.3in]{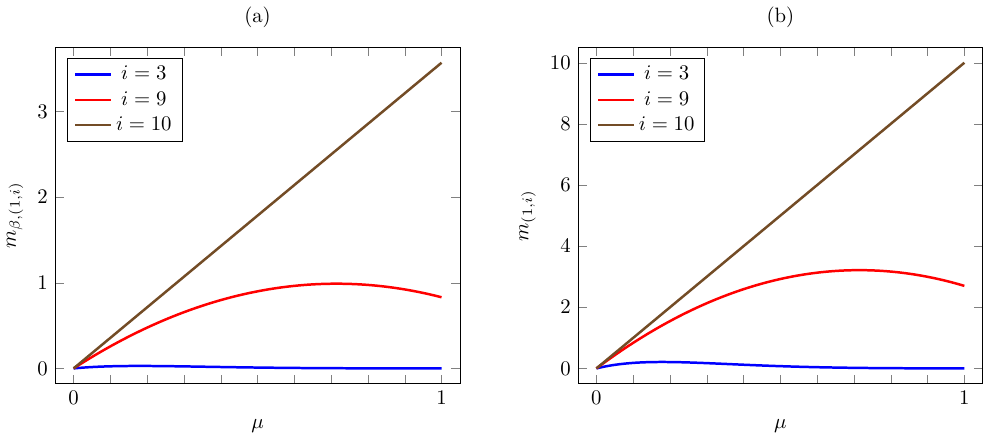}
\caption{\hl{Discounted} 
 (\textbf{a}) and average (\textbf{b}) criterion Whittle indices vs.\ $\mu$ for 
 threshold-type AoI cost.}
\label{fig:mpimuT}
\end{figure}

\section{Discussion}
\label{s:discuss}
This paper has addressed a broad model for scheduling AoI updates simultaneously 
incorporating the features of  random packet arrivals, unreliable channels, and general 
nondecreasing AoI costs in a no-buffer setting. 
Indexability (existence of the Whittle index) had only been established in prior work for 
special cases of this model and only under the average cost criterion using the prevailing 
approach for indexability analysis, based on first establishing optimality of threshold 
policies and then exploiting the monotonicity property of the optimal threshold.
We have proven that the broad model addressed is indexable and have evaluated its 
Whittle index in closed form, both under the discounted and the average criterion, by 
carrying out a PCL-indexability analysis, an approach that has been developed by the 
author over the last two decades.
By demonstrating its effectiveness in a relevant model that had not previously yielded to the prevailing analysis approach, 
we aim to bring such an approach to the attention of researchers in the area so that they 
can use it to analyze other models. 
We have further drawn qualitative insights on how scheduling priority under Whittle's 
index policy depends on models parameters by exploiting the closed formulae for 
Whittle's index for general costs and for special cost functions. 
It remains to numerically test the performance of Whittle's index policy in the present 
model.
Future research directions include carrying out a PCL-indexability analysis in more complex AoI scheduling models, such as those incorporating buffers.

\section*{Funding}
This work was supported in part by grant PID2019-109196GB-I00 funded by MCIN/AEI/10.13039/501100011033.

\section*{Abbreviations}
The following abbreviations are used in this manuscript:\\

\noindent 
\begin{tabular}{@{}ll}
AoI & Age of Information \\
{BS} & {Base station} \\
{CS} & {Complementary slackness} \\
MABP & Multi-armed bandit problem\\
MDP & Markov decision process \\
{MP} & {Marginal productivity} \\
{PCL} & {Partial conservation laws} \\
{PCLI} & {PCl-indexability} \\
RMABP & Restless multi-armed bandit problem
\end{tabular}


\appendix
\section[\appendixname~\thesection]{Work Metric Evaluation}
\label{s:wmeval}
\begin{proof}[Proof of Lemma $\ref{lma:Geval}$]
The fact that $G_{(1, i)}^k$ can be expressed in terms of the $G_{(0, j)}^k$ through 
\begin{equation}
\label{eq:G1ikG0ik}
G_{(1, i)}^k = 
\begin{cases}
G_{(0, i)}^k, & \enspace i \leqslant k \\
1 + \mu G_{(0, 0)}^k + (1-\mu) G_{(0, i)}^k, &  \enspace i > k.
\end{cases}
\end{equation}
follows straightforwardly from~(\ref{eq:G0ik}) and~(\ref{eq:G1ik}).

On the other hand, we have the following recursions for $G_{(0, i)}^k$:
\begin{equation}
\label{eq:G0ikrecgeqk}
\begin{split}
G_{(0, i)}^k & = \beta G_{(0, i+1)}^k, \quad i < k \\
G_{(0, i)}^k & = \beta (\lambda + p G_{(0, 0)}^k) + \beta q  G_{(0, i+1)}^k, \enspace i \geqslant k.
\end{split}
\end{equation}

The first recursion follows from~(\ref{eq:G0ik}) and~(\ref{eq:G1ikG0ik}) as, for $i < k$, 
\begin{align*}
G_{(0, i)}^k & = \beta (1-\lambda) G_{(0, i+1)}^k + \beta \lambda G_{(1, i+1)}^k = \beta (1-\lambda) G_{(0, i+1)}^k + \beta \lambda G_{(0, i+1)}^k = \beta G_{(0, i+1)}^k.
\end{align*}

The second recursion also follows from~(\ref{eq:G0ik}) and~(\ref{eq:G1ikG0ik}) because, for $i \geqslant k$ (recall that $p \triangleq \lambda \mu$, $q \triangleq 1 - p$):
\vspace{-12pt}
\begin{adjustwidth}{-1cm}{0cm}
\begin{align*}
G_{(0, i)}^k & = \beta (1-\lambda) G_{(0, i+1)}^k + \beta \lambda G_{(1, i+1)}^k  \\
& = \beta (1-\lambda) G_{(0, i+1)}^k + \beta \lambda \big(1 + \mu G_{(0, 0)}^k + (1-\mu) G_{(0, i+1)}^k\big)  \\
& =  \beta (\lambda + p G_{(0, 0)}^k) + \beta q  G_{(0, i+1)}^k.
\end{align*}
\end{adjustwidth}

Since the recursions in~(\ref{eq:G0ikrecgeqk}) are readily expressed as a backward and a 
forward recursion, we need to obtain the value of pivot element $G_{(0, k)}^k$ to solve 
them.
The following identities
readily follow by induction on the first and the second recursions in~(\ref{eq:G0ikrecgeqk}):
\begin{equation}
\label{eq:G0ikpivot}
\begin{split}
G_{(0, i)}^k & = \beta^{k-i} G_{(0, k)}^k, \quad i < k \\
G_{(0, k)}^k & = (1 + \cdots + (\beta q)^{l-1}) \beta (\lambda + p G_{(0, 0)}^k) + (\beta q)^l  G_{(0, k+l)}^k, \quad l \geqslant 1.
\end{split}
\end{equation}

From the first identity in~(\ref{eq:G0ikpivot}) we obtain $G_{(0, 0)}^k = \beta^{k} G_{(0, k)}^k$. 
Now, using that \linebreak  $(\beta q)^l G_{(0, k+l)}^k \to 0$ as $l \to \infty$ (because $G_{(b, i)}^\pi \leqslant 1/(1-\beta)$), we obtain
\begin{equation}
\label{eq:G0kkG00k}
G_{(0, k)}^k = \beta \frac{\lambda + p G_{(0, 0)}^k}{1-\beta q}.
\end{equation}

We thus have a $2 \times 2$ linear equation system characterizing  $G_{(0, 0)}^k$ and $G_{(0, k)}^k$, which yields
\[
G_{(0, k)}^k = \frac{\beta \lambda}{1 - \beta q - \beta^{k+1} p}.
\]

From this and~(\ref{eq:G0ikpivot}) we further obtain $G_{(0, k+l)}^k \equiv G_{(0, k)}^k$ for  $l \geqslant 1$,  completing the proof.
\end{proof}

\section[\appendixname~\thesection]{Cost Metric Evaluation}
\label{s:cmeval}
\begin{proof}[Proof of Lemma $\ref{lma:Feval}$]
It follows from~(\ref{eq:F0ik}) and~(\ref{eq:F1ik})
 that $F_{(1, i)}^k$ can be expressed in terms of the $F_{(0, j)}^k$ as 
\begin{equation}
\label{eq:F1ikF0ik}
F_{(1, i)}^k = 
\begin{cases}
F_{(0, i)}^k, & \quad i \leqslant k \\
\mu c_i + \mu F_{(0, 0)}^k + (1-\mu) F_{(0, i)}^k, &  \quad i > k.
\end{cases}
\end{equation}

On the other hand, we have the following recursions for $F_{(0, i)}^k$:
\begin{equation}
\label{eq:F0ikrecgeqk}
\begin{split}
F_{(0, i)}^k & = c_i + \beta F_{(0, i+1)}^k, \quad i < k \\
F_{(0, i)}^k & = c_i + \beta p c_{i+1} + \beta p F_{(0, 0)}^k + \beta q F_{(0, i+1)}^k, \enspace i \geqslant k.
\end{split}
\end{equation}

The first recursion follows from~(\ref{eq:F0ik}) and~(\ref{eq:F1ikF0ik}) as, for $i < k$, 
\vspace{-12pt}
\begin{adjustwidth}{-1cm}{0cm}
\begin{align*}
F_{(0, i)}^k & = c_i + \beta (1-\lambda) F_{(0, i+1)}^k + \beta \lambda F_{(1, i+1)}^k \\
& = c_i + \beta (1-\lambda) F_{(0, i+1)}^k + \beta \lambda F_{(0, i+1)}^k
  = c_i + \beta F_{(0, i+1)}^k.
\end{align*}
\end{adjustwidth}

The second recursion also follows from~(\ref{eq:F0ik}) and~(\ref{eq:F1ikF0ik}) because, for $i \geqslant k$,
\begin{align*}
F_{(0, i)}^k & = c_i + \beta (1-\lambda) F_{(0, i+1)}^k + \beta \lambda F_{(1, i+1)}^k \\
& = c_i + \beta (1-\lambda) F_{(0, i+1)}^k + \beta \lambda \big(\mu c_{i+1} + \mu F_{(0, 0)}^k + (1-\mu) F_{(0, i+1)}^k\big) \\
& = c_i + \beta p c_{i+1} + \beta p F_{(0, 0)}^k + \beta q F_{(0, i+1)}^k.
\end{align*}

Since the recursions in~(\ref{eq:F0ikrecgeqk}) are readily expressed as a backward 
recursion and a forward recursion, we need to obtain the value of pivot element $F_{(0, 
k)}^k$ to solve them.
For such a purpose we shall derive equations on $F_{(0, k)}^k$ and $F_{(0, 0)}^k$.
Thus, from the first recursion we readily obtain by induction the following identities:
\begin{equation}
\label{eq:F00kF0kk}
F_{(0, i)}^k = \sum_{j=i}^{k-1} \beta^{j-i} c_j + \beta^{k-i} F_{(0, k)}^k, \quad i < k.
\end{equation}

To analyze the second recursion in~(\ref{eq:F0ikrecgeqk}) we need to distinguish two cases, depending on whether $q = 0$ or $q > 0$.
Consider first the case $q = 0$, in which the second expression in~(\ref{eq:F0ikrecgeqk}) is not actually a recursion, as it reduces to
\begin{equation}
\label{eq:2ndrecq0}
F_{(0, i)}^k = c_i + \beta c_{i+1} + \beta F_{(0, 0)}^k, \enspace i \geqslant k.
\end{equation}

Now we solve the $2 \times 2$ linear equation system 
\begin{align*}
F_{(0, 0)}^k & = \sum_{j=0}^{k-1} \beta^{j} c_j + \beta^{k} F_{(0, k)}^k \\
F_{(0, k)}^k  & = c_k + \beta c_{k+1} + \beta F_{(0, 0)}^k
\end{align*}
 to obtain
\[
F_{(0, 0)}^k = \frac{1}{1 - \beta^{k+1}} \sum_{j=0}^{k+1} \beta^{j} c_j
\]
and
\begin{align*}
F_{(0, k)}^k & =  c_{k} + \beta c_{k+1} +  \frac{\beta}{1 - \beta^{k+1}} \sum_{j=0}^{k+1} \beta^{j} c_j = 
\frac{1}{1 - \beta^{k+1}} \Bigg(\sum_{j=0}^{k-1} \beta^{j+1} c_j +  c_{k} + \beta c_{k+1}\Bigg).
 \end{align*}
The above expressions  give the stated closed-form  formulae for $F_{(0, i)}^k$ in the case $q = 0$.

Consider now the case $q > 0$. Then, the following identities
readily follow by induction on the second recursion in~(\ref{eq:F0ikrecgeqk}): for $l \geqslant 1$,
\begin{equation}
\label{eq:F0ikpivot}
\begin{split}
F_{(0, k)}^k & =  \sum_{i=0}^{l-1} (\beta q)^i \big(c_{k+i} + \beta p c_{k+i+1} + \beta p F_{(0, 0)}^k\big) + (\beta q)^l F_{(0, k+l)}^k \\
 & =  \frac{\beta p (1- (\beta q)^l)}{ 1- \beta q} F_{(0, 0)}^k + (\beta q)^l F_{(0, k+l)}^k + c_{k} +  \beta \sum_{i=1}^{l-1} (\beta q)^{i-1} c_{k+i} + \beta p (\beta q)^{l-1} c_{k+l},
\end{split}
\end{equation}
where we have used that
\begin{align*}
\sum_{i=0}^{l-1} (\beta q)^{i} (c_{k+i} + \beta p c_{k+i+1}) & = \sum_{i=0}^{l-1} (\beta q)^{i} c_{k+i} + \beta p \sum_{i=0}^{l-1} (\beta q)^{i} c_{k+i+1}  \\
& = c_{k} + \beta q \sum_{i=1}^{l-1} (\beta q)^{i-1} c_{k+i} + \beta p \sum_{i=1}^{l} (\beta q)^{i-1} c_{k+i} \\
& = c_{k} +  \beta \sum_{i=1}^{l-1} (\beta q)^{i-1} c_{k+i} + \beta p (\beta q)^{l-1} c_{k+l}.
\end{align*}

Now, using that $(\beta q)^l F_{(0, k+l)}^k \to 0$ and $(\beta q)^{l-1} c_{k+l} \to 0$ as $l \to \infty$ (which follows from Assumption~\ref{ass:ci}), we obtain, letting $l \to \infty$,
\begin{equation}
\label{eq:F0kkF00k}
F_{(0, k)}^k =  \frac{\beta p}{1-\beta q} F_{(0, 0)}^k +  c_{k} +  \beta \sum_{i=1}^{\infty} (\beta q)^{i-1} c_{k+i}.
\end{equation}

We thus have a $2 \times 2$ linear equation system determining  $F_{(0, 0)}^k$ and $F_{(0, k)}^k$, which yields
\begin{align*}
F_{(0, k)}^k & = \frac{\beta p}{1-\beta q} \Bigg(\sum_{j=0}^{k-1} \beta^{j} c_j + \beta^{k} F_{(0, k)}^k\Bigg) +  c_{k} +  \beta \sum_{i=1}^{\infty} (\beta q)^{i-1} c_{k+i} \\
& = \frac{\beta^{k+1} p}{1-\beta q}  F_{(0, k)}^k + \frac{\beta p}{1-\beta q} \sum_{j=0}^{k-1} \beta^{j} c_j +   c_{k} +  \beta \sum_{i=1}^{\infty} (\beta q)^{i-1} c_{k+i},
\end{align*}
whence
\[
\bigg(1 - \frac{\beta^{k+1} p }{1-\beta q}\bigg) F_{(0, k)}^k = \frac{p}{1-\beta q} \sum_{j=0}^{k-1} \beta^{j+1} c_j +  c_{k} +  \beta \sum_{i=1}^{\infty} (\beta q)^{i-1} c_{k+i},
\]
i.e., 
\[
 \frac{1-\beta q- \beta^{k+1} p }{1-\beta q}F_{(0, k)}^k = \frac{p}{1-\beta q} \sum_{j=0}^{k-1} \beta^{j+1} c_j +  c_{k} +  \beta \sum_{i=1}^{\infty} (\beta q)^{i-1} c_{k+i}\]
i.e.,
\begin{align*}
 F_{(0, k)}^k = \frac{1}{1-\beta q- \beta^{k+1} p} \Bigg[p \sum_{j=0}^{k-1} \beta^{j+1} c_j  +  (1-\beta q) \bigg(c_{k}   +  \beta \sum_{i=1}^{\infty} (\beta q)^{i-1} c_{k+i}\bigg)\Bigg].
\end{align*}

This completes the proof.
\end{proof}

We next give the proof of Lemma $\ref{lma:mfeval}$.

\begin{proof}[Proof of Lemma $\ref{lma:mfeval}$]
First, we draw on the results in Section~\ref{s:cman} to represent $f_{(1, i)}^k$ as
\begin{equation}
\label{eq:f1ik}
\begin{split}
f_{(1, i)}^k & \triangleq F_{(1, i)}^{\langle 0, k\rangle} - F_{(1, i)}^{\langle 1, k\rangle} \\
& = F_{(0, i)}^k - \big(\mu c_i + \mu F_{(0, 0)}^k + (1-\mu) F_{(0, i)}^k\big) \\
& = 
\mu \big(F_{(0, i)}^k - F_{(0, 0)}^k - c_i\big) = \mu \big(\Phi_{i}^k - \Phi_{0}^k - c_i\big).
\end{split}
\end{equation}

In the case $q = 0$, we have
\begin{align*}
f_{(1, i)}^i & = \mu \big(\Phi_{i}^i - \Phi_{0}^i - c_i\big) =  \Phi_{i}^i - \sum_{j=0}^{i-1} \beta^{j} c_j - \beta^{i} \Phi_{i}^i - c_i  = 
\ (1 - \beta^{i}) \Phi_{i}^i - \sum_{j=0}^{i-1} \beta^{j} c_j - c_i \\
& = \frac{1 - \beta^{i}}{1- \beta^{i+1}} \bigg(\sum_{j=0}^{i-1} \beta^{j+1} c_j  +  c_{i}   +  \beta   c_{i+1}\bigg) - \sum_{j=0}^{i-1} \beta^{j} c_j - c_i \\
& =  \frac{\beta   (1 - \beta^{i})}{1- \beta^{i+1}}   c_{i+1} - \bigg(1 - \frac{\beta (1 - \beta^{i})}{1- \beta^{i+1}}\bigg) \sum_{j=0}^{i-1} \beta^{j} c_j - \bigg(1 -  \frac{1 - \beta^{i}}{1- \beta^{i+1}}\bigg) c_i  \\
& = \frac{1}{1- \beta^{i+1}} \bigg[\beta  (1 - \beta^{i})   c_{i+1} - (1 - \beta) \sum_{j=0}^{i} \beta^{j} c_j\bigg].
\end{align*}

As for the case $q > 0$, we have (recall that $\sigma_i \triangleq 1-\beta q- \beta^{i+1} p$)
\begin{align*}
f_{(1, i)}^i & = \mu \big(\Phi_{i}^i - \Phi_{0}^i - c_i\big) =  \mu \bigg[(1 - \beta^{i}) \Phi_{i}^i - \sum_{j=0}^{i-1} \beta^{j} c_j - c_i\bigg] \\
& = \mu \bigg[\frac{1 - \beta^{i}}{\sigma_i} \bigg(p \sum_{j=0}^{i-1} \beta^{j+1} c_j  +  (1-\beta q) \bigg(c_{i}   +  \beta \sum_{j=1}^{\infty} (\beta q)^{j-1} c_{i+j}\bigg)\bigg) - \sum_{j=0}^{i-1} \beta^{j} c_j - c_i\bigg] \\
& =  \mu \bigg[\frac{\beta (1 - \beta^{i})(1-\beta q)}{\sigma_i}  \sum_{j=1}^{\infty} (\beta q)^{j-1} c_{i+j}   \\
& \qquad - \bigg(1 - \frac{\beta (1 - \beta^{i}) p}{\sigma_i}\bigg) \sum_{j=0}^{i-1} \beta^{j} c_j  - 
\bigg(1 - \frac{(1 - \beta^{i})(1-\beta q)}{\sigma_i}\bigg) c_{i}\bigg]\\
& = \frac{\mu}{\sigma_i}\bigg[\beta (1 - \beta^{i})(1-\beta q)  \sum_{j=1}^{\infty} (\beta q)^{j-1} c_{i+j} - (1 - \beta) \sum_{j=0}^{i} \beta^{j} c_j\bigg].
\end{align*}

This completes the proof.
\end{proof}

%



\end{document}